\documentclass[a4paper,12pt]{article}

\newenvironment{proof}{\noindent {\bf Proof:}}{\hfill $\Box$}

\newtheorem{theorem}{Theorem}
\newtheorem{lemma}{Lemma}

\newtheorem{algorithm}{Algorithm}

\newtheorem{remark}{Remark}

\usepackage{booktabs}

\usepackage{tikz}

\usepackage{algorithm}
\usepackage[noend]{algpseudocode}
\makeatletter
\def\BState{\State\hskip-\ALG@thistlm}
\makeatother
\usepackage{algorithmicx}

\textheight235mm
\textwidth165mm
\voffset-10mm
\hoffset-12.5mm
\parindent0cm
\parskip2mm

\usepackage{amsmath}
\usepackage{amssymb}
\usepackage{amsfonts}
\usepackage{graphicx}
\usepackage{ dsfont }

\interfootnotelinepenalty=10000

\usepackage[normalem]{ulem}
\usepackage{color}

\usepackage{color}


\usepackage{tikz}
\usepackage{lipsum}

\usepackage{url}

\newcommand{\mr}[1]{\mathrm{#1}}
\newcommand{\emhp}[1]{\emph{#1}}

\newcommand{\M}{\mathop{\mathrm{M}}}

\newcommand{\C}{\mathcal{C}}
\newcommand{\D}{\mathcal{D}}

\newcommand{\R}{\mathcal{R}}

\newcommand{\Rb}{\mathbb{R}}
\newcommand{\Cb}{\mathbb{C}}
\newcommand{\Nb}{\mathbb{N}}
\newcommand{\Ib}{\mathbb{I}}

\newcommand{\bs}{\boldsymbol}


\newcommand{\Kc}{\mathcal{K}}

\newcommand*{\herm}{^{\mathsf{H}}}


\newcommand{\Lc}{\mathcal{L}}

\newcommand{\new}[1]{{\color{black}#1}}
\def\be{\begin{equation}}
\def\ee{\end{equation}}
\newcommand{\lattice}{\mathop{\mathrm{lattice}}}

\algrenewcommand\algorithmicensure{\textbf{Output:}}

\newcommand{\ob}{{\bs h}}
\newcommand{\conjug}{{\zeta}}

\title{\bf \new{Optimal construction of Koopman eigenfunctions for prediction and control}}

\begin{document}

\author{Milan Korda$^{1,2}$, Igor Mezi{\'c}$^3$}

\footnotetext[1]{CNRS, Laboratory for Analysis and Architecture of Systems (LAAS), Toulouse, France. \texttt{korda@laas.fr}.}
\footnotetext[2]{Czech Technical University in Prague, Faculty of Electrical Engineering, Department of Control Engineering, Prague, Czech Republic.}
\footnotetext[3]{ University of California, Santa Barbara. {\tt mezic@ucsb.edu}.}

\maketitle
\begin{abstract}
 This work presents a novel data-driven framework for constructing eigenfunctions of the Koopman operator geared toward prediction and control. The method leverages the richness of the spectrum of the Koopman operator away from attractors to construct a set of  eigenfunctions such that the state (or any other observable quantity of interest) is in the span of these eigenfunctions and hence predictable in a linear fashion. The eigenfunction construction is optimization-based with no dictionary selection required. Once a predictor for the uncontrolled part of the system is obtained in this way, the incorporation of control is done through a \emph{multi-step} prediction error minimization, carried out by a simple linear least-squares regression. The predictor so obtained is in the form of a linear controlled dynamical system and can be readily applied within the Koopman model predictive control framework of~\cite{korda2018linear} to control \emph{nonlinear} dynamical systems using \emph{linear} model predictive control tools. The method is entirely data-driven and based predominantly on convex optimization. The novel eigenfunction construction method is also analyzed theoretically, proving rigorously that the family of eigenfunctions obtained is rich enough to span the space of all continuous functions. In addition, the method is extended to construct \emph{generalized eigenfunctions} that also give rise Koopman invariant subspaces and hence can be used for linear prediction.  Detailed numerical examples\footnote{Code for the numerical examples is available from \url{https://homepages.laas.fr/mkorda/Eigfuns.zip}} demonstrate the approach, both for prediction and feedback control.


 \end{abstract}

\begin{center}\small
{\bf Keywords:} Koopman operator, eigenfunctions, model predictive control, data-driven methods
\end{center}

 \section{Introduction}
 
The Koopman operator framework is becoming an increasingly popular tool for data-driven analysis of dynamical systems. In this framework, a nonlinear system is represented by an infinite dimensional \emph{linear} operator, thereby allowing for spectral analysis of the nonlinear system akin to the classical spectral theory of linear systems or Fourier analysis. The theoretical foundations of this approach were laid out by Koopman in~\cite{Koopman:1931} but it was not until the early 2000's that the practical potential of these methods was realized in~\cite{MezicandBanaszuk:2004} and \cite{mezic2005spectral}. The framework became especially popular with the realization that the Dynamic Mode Decomposition (DMD) algorithm~\cite{schmid:2010} developed in fluid mechanics constructs an approximation of the Koopman operator, thereby allowing for theoretical analysis and extensions of the algorithm (e.g., \cite{williams2015data,arbabi2017ergodic,korda2018convergence}). This has spurred an array of applications in fluid mechanics~\cite{rowley2009}, power grids~\cite{Susukietal:2016}, neurodynamics~\cite{brunton2016extracting}, energy efficiency~\cite{GeorgescuandMezic:2015},  or molecular physics~\cite{wuetal:2016}, to name just a few.

Besides descriptive analysis of nonlinear systems, the Koopman operator approach was also utilized to develop systematic frameworks for control~\cite{korda2018linear} (with earlier attempts in, e.g., \cite{brunton_dmd_control,Koopman_cont_extend}), state estimation~\cite{surana_estim,surana2017koopman} and system identification~\cite{mauroy2019koopman} of nonlinear systems. All these works rely crucially on the concept of \emph{embedding} (or lifting) of the original state-space to a higher dimensional space where the dynamics can be accurately predicted by a linear system. In order for such prediction to be accurate over an extended time period, the embedding mapping must span an \emph{invariant subspace} of the Koopman operator, i.e., the embedding mapping must consist of the (generalized) eigenfunctions of the Koopman operator (or linear combinations thereof).

It is therefore of paramout importance to construct accurate approximations of the Koopman eigenfunctions. The leading data-driven algorithms are either based on the Dynamic Mode Decomposition (e.g., \cite{schmid:2010,williams2015data}) or the Generalized Laplace Averages (GLA) algorithm~\cite{mohr2014construction}.  The DMD-type methods can be seen as finite section operator approximation methods, which do not exploit the particular Koopman operator structure and enjoy only weak spectral convergence guarantees~\cite{korda2018convergence}. On the other hand, the GLA method does exploit the Koopman operator structure and ergodic theory and comes with spectral convergence guarantees, but suffers from numerical instabilities for eigenvalues that do not lie on the unit circle (discrete time) or the imaginary axis (continuous time). Among the plethora of more recently introduced variations of the (extended) dynamic mode decomposition algorithm, let us mention the variational approach~\cite{wu2017variational}, the sparsity-based method~\cite{kaiser2017data} or the neural-networks-based method~\cite{takeishi2017learning}.

In this work, we propose a new algorithm for construction of the Koopman eigenfunctions from data. The method is geared toward transient, off-attractor, dynamics where the spectrum of the Koopman operator is \emph{extremely rich}. In particular, provided that a \emph{non-recurrent} surface exists in the state-space,  \emph{any} complex number is an eigenvalue of the Koopman operator with an associated continuous (or even smooth if so desired) eigenfunction, defined everywhere on the image of this non-recurrent surface through the flow of the dynamical system. What is more, the associated eigenspace is infinite-dimensional, parametrized by functions defined on the boundary of the non-recurrent surface. We leverage this richness to obtain a large number of eigenfunctions in order to ensure that the observable quantity of interest (e.g., the state itself) lies \emph{within the span of the eigenfunctions} (and hence within an invariant subspace of the Koopman operator) and is therefore predictable in a \emph{linear} fashion. The requirement that the embedding mapping spans an invariant subspace \emph{and} the quantity of interest belongs to this subspace are crucial for practical applications: they imply both a \emph{linear time evolution} in the embedding space as well the possibility to \emph{reconstruct} the quantity of interest in a \emph{linear fashion}. On the other hand, using a nonlinear reconstruction mapping may lead to comparatively low-dimensional embeddings but it does not offer a significant advantage when used within an optimization-based control or estimation scheme since in this case one non-convex problem is replaced by another.

In addition to eigenfunctions, the proposed method can be extended to construct \emph{generalized eigenfunctions} that also give rise to Koopman invariant subspaces and can hence be used for linear prediction; this further enriches the class of embedding mappings constructible using the proposed method.
 
On an algorithmic level, given a set of initial conditions lying on distinct trajectories, a set of complex numbers (the eigenvalues) and a set of continuous functions, the proposed method constructs eigenfunctions by simply ``flowing'' the values of the continuous functions forward in time according the eigenfunction equation, starting from the values of the continuous functions defined on the set of initial conditions. Provided the trajectories are non-periodic, this consistently and uniquely defines the eigenfunctions on the entire data set. These eigenfunctions are then extended to the entire state-space by interpolation or approximation. We prove that such extension is possible (i.e., there exist continuous eigenfunctions taking the computed values on the data set) provided that there is a non-recurrent surface passing through the initial conditions of the trajectories and we prove that such surface always exists provided the flow is rectifiable in the considered time interval. We also prove that the eigenfunctions constructed in this way span the space of all continuous functions in the limit as the number of boundary function-eigenvalue pairs tends to infinity. This implies that in the limit any continuous observable can be arbitrarily accurately approximated by \emph{linear} combinations of the eigenfunctions, a crucial requirement for practical applications.

\new{Importantly, both the values of the boundary functions and the eigenvalues can be selected using numerical optimization.  The minimized objective is simply the projection error of the observables of interest onto the span of the eigenfunctions. For the problem of boundary function selection, we derive a convex reformulation, leading to a linear least-squares problem. The problem of eigenvalue selection appears to be intrinsically non-convex but is fortunately relatively low-dimensional, thereby amenable to a wide array of local and global non-convex optimization techniques. This is all that is required to construct the linear predictors in the uncontrolled setting.}

In the controlled setting, we follow a two-step procedure. First, we construct a predictor for the uncontrolled part of the system (i.e., with the control being zero or any other fixed value). Next, using a second data set generated with control we minimize a \emph{multi-step} prediction error in order to obtain the input matrix for the linear predictor. Crucially, the multi-step error minimization boils down to a simple \emph{linear} least-squares problem; this is due to the fact that the dynamics and output matrices are already identified. This is a distinctive feature of the approach, compared to (E)DMD-based methods (e.g., \cite{korda2018convergence,brunton_dmd_control}) where only a one-step prediction error can be minimized in a convex fashion. 

The predictors obtained in this way are then applied within the Koopman model predictive control (Koopman MPC) framework of~\cite{korda2018linear}, which we briefly review in this work. \new{A core component of any MPC controller is the minimization of an objective functional over a multi-step prediction horizon; this is the primary reason for using a multi-step prediction error minimization during the predictor construction.}  However, the eigenfunction and linear predictor construction methods are completely general and immediately applicable, for example, in the state estimation setting~\cite{surana_estim,surana2017koopman}.

\new{The predictors obtained can then applied within the Koopman model predictive control (Koopman MPC) framework (see~\cite{korda2018linear} for a general theory and~\cite{arbabi2018data, korda2018power} for applications in fluid mechanics and power grid control). A core component of any MPC controller is the minimization of an objective functional over a multi-step prediction horizon; this is the primary reason for using a multi-step prediction error minimization during the predictor construction.}  However, the eigenfunction and linear predictor construction methods are completely general and immediately applicable, for example, in the state estimation setting~\cite{surana_estim,surana2016koopman}.

The fact that the spectrum of the Koopman operator is very rich in the space of continuous functions is a well known fact in the Koopman operator community; see, e.g., \cite[Theorem 3.0.2]{kuster2015koopman}. In particular, the fact that, away from singularities, eigenfunctions corresponding to \emph{arbitrary} eigenvalues can be constructed was noticed in~\cite{mezic2017koopman} where these were termed \emph{open eigenfunctions} and they were subsequently used in~\cite{bollt2018matching} to find conjugacies between dynamical systems. This work is, to the best of our knowledge, the first one to exploit the richness of the spectrum for prediction and control using \emph{linear} predictors and to provide a theoretical analysis of the set of eigenfunctions obtained in this way. On the other hand, the spectrum of the Koopman operator ``on attractor'', in a post-transient regime, is much more structured and can be analyzed numerically in a great level of detail (see, e.g., \cite{korda2018data,govindarajan2018approximation}).

\paragraph{Notation} The set of real numbers is denoted by $\Rb$, the set of complex numbers by $\Cb$, the set of natural numbers by $\Nb = \{1,2,\ldots\}$ and $\Nb_0 = \Nb \cup \{0\}$. The space of continuous \new{complex-valued} functions defined on a set $X\subset \Rb^n$ is denoted by \new{$\C(X)$ or $\C(X;\Cb)$, whenever we want to emphasize that the codomain is complex}. The symbol~$\circ$ denotes the pointwise composition of two functions, i.e., $(g\circ f)(x) = g(f(x))$. The Moore-Penrose pseudoinverse of a matrix ${\bf A} \in \Cb^{n\times n}$ is denoted by ${\bf A}^\dagger$, the transpose by ${\bf A}^\top$  and the conjugate (Hermitian) transpose by ${\bf A}\herm$. The identity matrix is denoted by ${\bf I}$. The symbol $\mr{diag}(\cdot,\ldots,\cdot)$ denotes a (block-) diagonal matrix composed of the arguments. The symbol $\|x\|_2$ denotes the Euclidean norm and $\|x\|_\infty$ the max norm of a vector $x\in \Rb^n$.

\section{Koopman operator}
We first develop our framework for uncontrolled dynamical systems and generalize it to controlled systems in Section~\ref{sec:cont}. Consider therefore the nonlinear dynamical system
\begin{equation}\label{eq:sys}
	\dot{x} =  f(x)
\end{equation}
with the state $x\in X \subset \Rb^n$ and $f$ Lipschitz continuous on $X$. The flow of this dynamical system is denoted by $S_t(x) $, i.e.,
\begin{equation}\label{eq:flow}
	\frac{d}{dt}S_t(x) = f(S_t(x)),
\end{equation}
\new{which we assume to be well defined} for all $x \in X$ and all $t\ge 0$. The \emph{Koopman operator semigroup} $(\Kc_t)_{t \ge 0}$ is defined by
\[
\Kc_t g = g \circ  S_t
\]
for all $g \in \C(X)$. Since the flow of a dynamical system with Lipschitz vector field is also Lipschitz, it follows that
\[
\Kc_t : \C(X) \to \C(X),
\]
i.e., each element of the Koopman semigroup maps continuous functions to continuous functions. Crucially for us, each $\Kc_t$ is a \emph{linear} operator\footnote{\new{To see the linearity of $\Kc_t$ consider $g_1\in \C(X)$, $g_2\in \C(X)$ and $\alpha \in \Cb$. Then we have $\Kc_t(\alpha g_1 + g_2) = (\alpha g_1 + g_2)\circ S_t = \alpha g_1\circ S_t + g_2\circ S_t = \alpha \Kc_t g_1 + \Kc_tg_2$.}}.

With a slight abuse of language, from here on, we will refer to the Koopman operator semigroup simply as the \emph{Koopman operator}.

\paragraph{Eigenfunctions} An \emph{eigenfunction} of the Koopman operator associated to an eigenvalue $\lambda \in \Cb$ is any nonzero function $\phi \in \new{\C(X;\Cb)}$ satisfying
\begin{equation}\label{eq:efun_def1}
(\Kc_t \phi)(x) = e^{\lambda t}\phi(x),
\end{equation}
which is equivalent to
\begin{equation}\label{eq:efun_def}
\phi(S_t(x)) = e^{\lambda t}\phi(x).
\end{equation}
Therefore, any such eigenfunction defines a coordinate evolving \emph{linearly} along the flow of~(\ref{eq:sys}) and satisfying the \emph{linear} ordinary differential equation (ODE)
\begin{equation}\label{eq:KoopmanEigfun_ODE}
\frac{d}{dt}\phi(S_t(x)) = \lambda \phi(S_t(x)).
\end{equation}

\subsection{Linear predictors from eigenfunctions}
Since the eigenfunctions define linear coordinates, they can be readily used to construct \emph{linear} predictors for the nonlinear dynamical system~(\ref{eq:sys}).  The goal is to predict the evolution of a quantity of interest $\ob (x)$ (often referred to as an ``observable'' or an ``output'' of the system) along the trajectories of~(\ref{eq:sys}). The function 
\[
\ob : \Rb^n\to \Rb^{n_\ob}
\]
often represents the state itself, i.e., $\ob(x) = x$ or an output of the system (e.g., the attitude of a vehicle or the kinetic energy of a fluid) or the cost function to be minimized within an optimal control problem or a nonlinear constraint on the state of the system (see Section~\ref{sec:KoopmanMPC} for concrete examples). The distinctive feature of this work is the requirement that the predictor constructed be a \emph{linear} dynamical system. This facilitates the use of \emph{linear} tools for state estimation and control, thereby greatly simplifying the design procedure as well as drastically reducing computational and deployment costs (see \cite{surana_estim} for applications of this idea to state estimation and \cite{korda2018linear} for model predictive control).

Let $\phi_1,\ldots,\phi_N$ be eigenfunctions of the Koopman operator with the associated (not necessarily distinct) eigenvalues $\lambda_1,\ldots,\lambda_N$. Then we can construct a linear predictor of the form
\begin{subequations}\label{eq:linPred_uncont}
\begin{align}
\dot{z} &=  Az \\
z_0 &= \bs \phi(x_0), \\
\hat{y} &=  Cz,
\end{align}
\end{subequations}
where
\begin{equation}\label{eq:A_and_phi}
A = \begin{bmatrix}
    \lambda_{1} & & \\
    & \ddots & \\
    & & \lambda_{N}
  \end{bmatrix}, \qquad \bs \phi = \begin{bmatrix}
\phi_1\\\vdots \\ \phi_N
\end{bmatrix},
\end{equation}
and where $\hat y$ is the prediction of $\ob(x)$. To be more precise, the prediction of $\ob(x(t)) = \ob(S_t(x_0))$ is given by
\[
\ob(x(t)) \approx \hat y (t) = C 
e^{A t} z_0 = C\begin{bmatrix}
   e^{\lambda_{1}t} & & \\
    & \ddots & \\
    & & e^{\lambda_{N}t}
    \end{bmatrix}z_0.
\]
The matrix $C$ is chosen such that the projection of $\ob$ onto $\mr{span}\{\phi_1,\ldots,\phi_N\}$ is minimized, i.e., $C$ solves the optimization problem
\begin{equation}\label{opt:proj}
\min_{C\in \Cb^{n_\ob\times N}} \|\ob - C\bs\phi \|,
\end{equation}
where $\| \cdot\|$ is a norm on the space of continuous functions (e.g., the sup-norm or the $L_2$ norm).

\paragraph{Prediction error} Since $\phi_1,\ldots,\phi_N$ are the eigenfunctions of the Koopman operator, the prediction of the evolution of the eigenfunctions along the trajectory of~(\ref{eq:sys}) is error-free, i.e.,
\[
z(t) = \bs\phi(x(t)).
\]
Therefore, the \emph{sole source} of the prediction error
\begin{equation}\label{eq:predError}
\|\ob (x(t)) - \hat y(t) \|
\end{equation}
is the error in the projection of $\ob$ onto $\mr{span}\{\phi_1,\ldots,\phi_N\}$, quantified by Eq.~(\ref{opt:proj}). In particular, if $\ob\in \mr{span}\{\phi_1,\ldots,\phi_N\}$ (with the inclusion understood componentwise), we have
\[
\|\ob (x(t)) - \hat y(t) \| = 0\quad \forall\, t\ge 0.
\]
This observation\footnote{\new{To see this, notice that $\ob$ belonging to the span of $\bs\phi$ is equivalent to the existence of a matrix $C$ such that $\ob = C\bs\phi$. Therefore, $\ob(x(t)) = \ob\circ S_t = C\bs\phi\circ S_t = Ce^{At}\bs\phi = \hat y(t)$.}} will be crucial for defining a meaningful objective function when learning the eigenfunctions from data.

\new{\paragraph{Goals} The primary goal of this paper is a data-driven construction of a set of eigenfunctions $\{\phi_1,\ldots,\phi_N\}$ such that the error~(\ref{opt:proj}) is minimized. In doing so, we introduce and theoretically analyze a novel eigenfunction construction procedure (Section~\ref{sec:nonRecur}) from which a data-driven optimization-based algorithm is derived in Section~\ref{sec:learning}. A secondary goal of this paper is to use the constructed eigenfunctions for prediction in a controlled setting (Section~\ref{sec:cont}) and subsequently for model predictive control design (Section~\ref{sec:KoopmanMPC}) within the Koopman MPC framework of~\cite{korda2018linear}.}

\section{Non-recurrent sets and eigenfunctions}\label{sec:nonRecur}
In this section we show how non-recurrent sets naturally give rise to eigenfunctions. Let time \new{$T \in (0,\infty)$} be given. A set $\Gamma \subset X$ is called non-recurrent if
\[
x \in \Gamma \Longrightarrow S_t(x) \notin \Gamma\quad \forall t\in (0,T].
\]
Given \emph{any} function $g\in \C(\Gamma)$ and \emph{any} $\lambda \in \Cb$, we can \emph{construct} an eigenfunction of the Koopman operator by simply solving the defining ODE~(\ref{eq:KoopmanEigfun_ODE}) ``initial condition by initial condition'' for all initial conditions $x_0 \in \Gamma$. Mathematically, we define for all $x_0 \in \Gamma$
\begin{equation}\label{eq:phi_comp_def}
\phi_{\lambda,g}(S_t(x_0)) = e^{\lambda t} g(x_0),
\end{equation}
which defines the eigenfunction on the entire image $X_T$ of $\Gamma$ by the flow $S_t(\cdot)$ for $t\in [0,T]$. Written explicitly, this image is
\[
X_T = \bigcup_{t\in [0,T]} S_t(\Gamma) = \bigcup_{t\in [0,T]} \{ S_t(x_0) \mid x_0\in \Gamma\}.
\]
See Figure~\ref{fig:nonRecur_basic} for an illustration. 
\begin{figure*}[tb]
	\begin{picture}(50,140)
		\put(120,0){\includegraphics[width=100mm]{./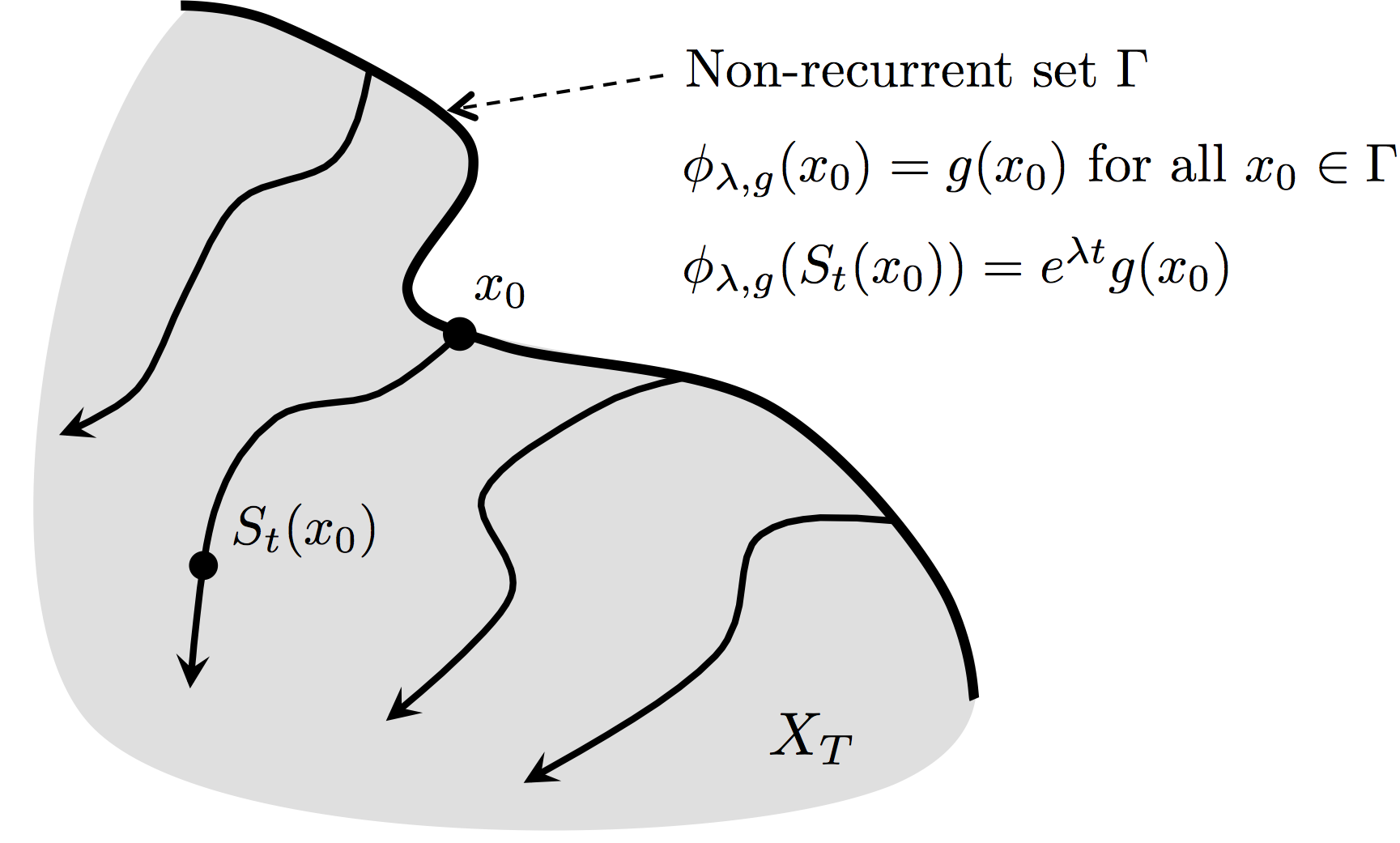}}
	\end{picture}
	\caption{\footnotesize Construction of eigenfunctions using a non-recurrent set $\Gamma$ and a continuous function $g$ defined on $\Gamma$.}
	\label{fig:nonRecur_basic}
\end{figure*}
To get an explicit expression for $\phi_{\lambda,g}(x)$ we flow backward in time until we hit the non-recurrent set $\Gamma$, obtaining
\begin{equation}\label{eq:phi_def}
\phi_{\lambda,g}(x) = e^{-\lambda\tau(x)}g(S_{\tau(x)}(x))
\end{equation}
for all $x\in X_T$, where
\[
\tau(x) = \inf_{t \in \Rb}\{t\mid S_{t}(x) \in \Gamma\}
\]
is the first time that the trajectory of~(\ref{eq:sys}) hits $\Gamma$ starting from $x$. By construction, for $x\in X_T$ we have 
\[
\tau(x) \in [-T,0].
\]
The results of this discussion are summarized in the following theorem:
\begin{theorem}
Let $\Gamma$ be a non-recurrent set, $g\in \C(\Gamma)$ and $\lambda \in \Cb$. Then $\phi_{\lambda,g}$ defined by~(\ref{eq:phi_def}) is an eigenfunction of the Koopman operator on $X_T$. In particular, $\phi_{\lambda,g}$ satisfies (\ref{eq:efun_def}) and (\ref{eq:KoopmanEigfun_ODE}) for all $x\in X_T$ and all $t$ such that $S_t(x) \in X_T$. In addition, if $g$ is Lipschitz continuous, then also
\begin{equation}\label{eq:KoopEfun_pde}
\nabla \phi_{\lambda,g}\cdot f = \lambda \phi_{\lambda,g}
\end{equation}
almost everywhere in $X_T$ (and everywhere in $X_T$ if $g$ and $f$ are differentiable).
\end{theorem}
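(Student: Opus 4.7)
The plan is to establish well-definedness of $\phi_{\lambda,g}$ on $X_T$ using non-recurrence, then derive the functional equation (\ref{eq:efun_def}) directly from the construction, obtain the ODE (\ref{eq:KoopmanEigfun_ODE}) by a single differentiation in $t$, and finally treat the infinitesimal generator form (\ref{eq:KoopEfun_pde}) via Rademacher's theorem.

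First I would verify that every $x \in X_T$ admits a unique decomposition $x = S_s(x_0)$ with $x_0 \in \Gamma$ and $s \in [0,T]$. If $S_s(x_0) = S_{s'}(x_0')$ with $0 \le s \le s' \le T$ and $x_0, x_0' \in \Gamma$, then $x_0 = S_{s'-s}(x_0')$; non-recurrence forces $s' - s = 0$ and hence $x_0 = x_0'$. Consequently $\tau(x) = -s$, and definitions (\ref{eq:phi_comp_def}) and (\ref{eq:phi_def}) agree and yield a well-defined function on $X_T$. For (\ref{eq:efun_def}), I pick $x = S_s(x_0) \in X_T$ and $t$ with $S_t(x) = S_{s+t}(x_0) \in X_T$; the same uniqueness forces $s+t \in [0,T]$, so directly from the construction
\[
\phi_{\lambda,g}(S_t(x)) = e^{\lambda(s+t)}g(x_0) = e^{\lambda t}\phi_{\lambda,g}(x).
\]
Differentiating this identity in $t$ immediately yields (\ref{eq:KoopmanEigfun_ODE}).

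For the final claim, assuming $g$ is Lipschitz I would argue that $\phi_{\lambda,g}$ is locally Lipschitz on $X_T$: along each trajectory it is a smooth function of $t$ via the factor $e^{\lambda t}$, while across trajectories the Lipschitz continuity of $g$ combined with Lipschitz dependence of the flow on initial conditions propagates the regularity. Rademacher's theorem then supplies differentiability almost everywhere on $X_T$, and at any differentiable point $x$ the chain rule applied to (\ref{eq:KoopmanEigfun_ODE}) at $t=0$ gives
\[
\nabla \phi_{\lambda,g}(x) \cdot f(x) = \left.\frac{d}{dt}\phi_{\lambda,g}(S_t(x))\right|_{t=0} = \lambda\, \phi_{\lambda,g}(x).
\]
If $g$ and $f$ are differentiable the same chain-rule computation is valid at every point of $X_T$, yielding the pointwise statement.

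The main obstacle I anticipate is the cross-trajectory Lipschitz estimate: it implicitly requires the flow to foliate $X_T$ in a Lipschitz manner, which asks for some regularity of $\Gamma$ (transversality to $f$ together with, say, a Lipschitz graph structure). For an arbitrary non-recurrent $\Gamma$ this propagation could fail, and one must either restrict to regular transversal $\Gamma$ or replace the Rademacher step by a measure-theoretic argument along the flow-invariant foliation to secure the almost-everywhere assertion.
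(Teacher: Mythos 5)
Your proposal follows essentially the same route as the paper's proof: well-definedness via non-recurrence, equivalence of~(\ref{eq:phi_comp_def}) and~(\ref{eq:phi_def}), the functional equation~(\ref{eq:efun_def}) by direct computation, the ODE~(\ref{eq:KoopmanEigfun_ODE}) by differentiation, and Rademacher plus the chain rule for~(\ref{eq:KoopEfun_pde}). You spell out the uniqueness-of-decomposition argument that the paper leaves implicit ("the definition is consistent"), which is a useful elaboration rather than a different approach.

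The caveat you raise at the end is genuine, and in fact it points to a gap that is also present in the paper's own proof. The paper asserts flatly that "$g$ Lipschitz implies that $\phi_{\lambda,g}$ is Lipschitz," but this is not automatic from~(\ref{eq:phi_def}): $\phi_{\lambda,g}(x)=e^{-\lambda\tau(x)}g(S_{\tau(x)}(x))$ involves the first-hitting time $\tau$, and the Lipschitz continuity of $\tau$ (and hence of the projection $x\mapsto S_{\tau(x)}(x)$) requires that $\Gamma$ be a sufficiently regular surface transversal to $f$, with the flow foliating $X_T$ in a bi-Lipschitz fashion. For an arbitrary non-recurrent set $\Gamma$ — e.g.\ one that grazes the flow tangentially somewhere — this can fail, and then $\phi_{\lambda,g}$ need not be Lipschitz across trajectories even when $g$ is. Moreover, for the gradient in~(\ref{eq:KoopEfun_pde}) to be meaningful one implicitly needs $X_T$ to be full-dimensional, which already presupposes $\Gamma$ is an $(n-1)$-dimensional transversal surface (as in Lemma~\ref{lem:nonRecurSurface}). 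So your "main obstacle" is not a defect in your argument relative to the paper's — both need the same unstated regularity hypothesis on $\Gamma$, and your concluding remark correctly isolates the assumption (transversality and a Lipschitz graph structure for $\Gamma$) under which the Rademacher step is legitimate.
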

\begin{proof}
The result follows by construction. Since $\Gamma$ is non-recurrent, the definition~(\ref{eq:phi_comp_def}) is consistent for all $t\in [0,T]$ and equivalent to~(\ref{eq:phi_def}). Since $\new{S_0(x_0) = x_0}$, we have $\phi_{\lambda,g}(x_0) = g(x_0)$ for all $x_0\in \Gamma$ and hence Eq.~(\ref{eq:phi_comp_def}) is equivalent to Eq.~(\ref{eq:efun_def}) defining the Koopman eigenfunctions. To prove~(\ref{eq:KoopEfun_pde}) observe that $g$ Lipschitz implies that $\phi_{\lambda,g}$ is Lipschitz and the result follows from~(\ref{eq:KoopmanEigfun_ODE}) by the chain rule and the Rademacher theorem, which asserts almost-everywhere differentiability of Lipschitz functions.
\end{proof}

Several remarks are in order.

\paragraph{Richness} We emphasize that this construction works for an \emph{arbitrary} $\lambda \in \Cb$ and an \emph{arbitrary} function $g$ continuous\footnote{In this work we restrict our attention to functions $g$ continuous on $\Gamma$ but in principle discontinuous functions of a suitable regularity class could be used as well.} on $\Gamma$. Therefore, there are uncountably many eigenfunctions that can be generated in this way and in this work we exploit this to construct a sufficiently rich collection of eigenfunctions such that the projection error~(\ref{opt:proj}) is minimized. The richness of the class of eigenfunctions is analyzed theoretically in Section~\ref{sec:density} and used practically in Section~\ref{sec:learning} for data-driven learning of eigenfunctions.

\paragraph{Time direction} The same construction can be carried out backwards in time or forward and backward in time, as long as $\Gamma$ is non-recurrent for the time interval considered. In this work we focus on forward-in-time construction which naturally lends itself to data-driven applications where typically only forward-in-time data is available.

\paragraph{History} This construction is very closely related to the concept of \emph{open eigenfunctions} introduced in~\cite{mezic2017koopman}, which were subsequently used in~\cite{bollt2018matching} to find conjugacies between dynamical systems. This work is, to the best of our knowledge, the first one to use such construction for prediction and control using linear predictors.

\subsection{Non-recurrent set vs Non-recurrent surface} It is useful to think of the non-recurrent set $\Gamma$ as an $n-1$ dimensional surface so that $X_T$ is full dimensional. Such surface can be for example any level set of a Koopman eigenfunction with non-zero real part (e.g., isostable) or a level set of a Lyapunov function. However, these level sets can be hard to obtain in practice; fortunately, their knowledge is \emph{not required}. The reason for this is that the set~$\Gamma$ can be a finite \emph{discrete} set in which case $X_T$ is simply the collection of all trajectories with initial conditions in $\Gamma$; since trajectories are one-dimensional, any randomly generated finite (or countable) discrete set will be non-recurrent on \new{$[0,T]$} with probability one (unless the dynamics is periodic). This is a key feature of our construction that will be utilized in Section~\ref{sec:learning} for a data-driven learning of the eigenfunctions. A natural question arises: can one find a non-recurrent surface passing through a given finite discrete non-recurrent set $\Gamma$? The answer is positive, provided that the points in $\Gamma$ do not lie on the same trajectory and the flow can be rectified:
\begin{lemma}\label{lem:nonRecurSurface}
Let $\Gamma = \{x^1,\ldots, x^M\}$ be a finite set of points in $X$ and let $X'$ be a full dimensional compact set containing $\Gamma$ on which the flow of~(\ref{eq:sys}) can be rectified, i.e., there exists a diffeomorphism $\conjug: Y' \to X'$ through which~(\ref{eq:sys}) is conjugate\footnote{\new{Two dynamical systems $\dot{x} = f_1(x)$ and $\dot{y} = f_2(y)$ are conjugate through a diffeomorphism $\zeta$ if the associated flows $S_{1,t}$ and $S_{2,t}$ satisfy $S_{1,t}(x)= \zeta(S_{2,t}(\zeta^{-1}(x)))$. For the vector fields, this means that $f_2(y) = (\frac{\partial \zeta}{\partial y})^{-1}f_1(\zeta(y))$.}} to 
\begin{equation}\label{eq:rectified1}
\dot{y} = (0,\ldots,0,1)
\end{equation}
with $Y' \subset \Rb^n$ convex. Assume that no two points in the set $\Gamma$ lie on the same trajectory of~(\ref{eq:sys}). Then there exists an $n-1$ dimensional surface $\hat \Gamma \supset \Gamma$, closed in the standard topology of $\Rb^n$, such that $x\in \hat\Gamma$ implies $S_t(x) \notin \hat\Gamma$ for any $t > 0$ satisfying $S_{t'}(x) \in X'$ for all $t'\in [0,t]$.
\end{lemma}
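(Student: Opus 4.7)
The plan is to work entirely in the rectified coordinates on $Y'$, construct an explicit graph-type surface there, and then pull it back through $\zeta$ to obtain $\hat\Gamma$ in the original coordinates.

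First, I would set $y^i = \zeta^{-1}(x^i) \in Y'$ and write each rectified point as $y^i = (p^i, q^i)$ with $p^i \in \Rb^{n-1}$ and $q^i \in \Rb$. The key translation step is to observe that, because in rectified coordinates the flow is $S_{2,t}(y) = y + t e_n$, two points $y, y' \in Y'$ lie on the same trajectory of (\ref{eq:rectified1}) if and only if their first $n-1$ coordinates coincide. The hypothesis that no two points of $\Gamma$ lie on the same trajectory of (\ref{eq:sys}) therefore guarantees that $p^1, \ldots, p^M$ are \emph{pairwise distinct} points of $\Rb^{n-1}$.

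Next, I would construct a continuous interpolant $h : \Rb^{n-1} \to \Rb$ satisfying $h(p^i) = q^i$ for $i = 1, \ldots, M$ (using, for instance, Tietze extension applied to the continuous map defined on the finite set $\{p^1,\ldots,p^M\}$, or just a convex-combination bump-function formula). Define the candidate surface in rectified coordinates as the graph
\[
\hat\Gamma_Y \defeq \bigl\{ (p, h(p)) : p \in \pi(Y') \bigr\},
\]
where $\pi : \Rb^n \to \Rb^{n-1}$ is the projection onto the first $n-1$ coordinates. Since $Y'$ is compact (it is homeomorphic to the compact set $X'$) and $h$ is continuous, $\hat\Gamma_Y$ is a compact $(n-1)$-dimensional graph. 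Setting $\hat\Gamma \defeq \zeta(\hat\Gamma_Y)$ then yields a compact, hence closed, $(n-1)$-dimensional embedded surface in $\Rb^n$ which contains $\Gamma$ by construction (indeed $\zeta((p^i, h(p^i))) = \zeta(y^i) = x^i$).

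Finally, I would verify non-recurrence. Take any $x \in \hat\Gamma$, and suppose $t > 0$ is such that $S_{t'}(x) \in X'$ for all $t' \in [0, t]$. In rectified coordinates, $\zeta^{-1}(x) = (p, h(p))$ for some $p$, and $\zeta^{-1}(S_t(x)) = (p, h(p) + t)$. If $S_t(x) \in \hat\Gamma$, then $h(p) + t = h(p)$, forcing $t = 0$, a contradiction; hence $S_t(x) \notin \hat\Gamma$.

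The only real obstacle is the interpolation step: one needs a continuous $h$ hitting prescribed values at the distinct points $p^i$, but this is standard (Tietze, or an explicit partition-of-unity formula such as $h(p) = \sum_i q^i \psi_i(p)$ with $\psi_i$ continuous bumps summing to $1$ and $\psi_i(p^j) = \delta_{ij}$). Everything else is essentially a bookkeeping argument: the rectified form reduces non-recurrence to the elementary statement that a function's graph meets each vertical line exactly once, and compactness of $Y'$ delivers the required closedness of $\hat\Gamma$ in $\Rb^n$ automatically.
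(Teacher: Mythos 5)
Your approach mirrors the paper's (rectify, project, interpolate, pull back), but there are two technical gaps and one elision worth flagging. First, you set $\hat\Gamma_Y = \{(p,h(p)) : p \in \pi(Y')\}$, but this graph will in general stick out of $Y'$; since $\zeta$ is defined only on $Y'$, the pushforward $\hat\Gamma := \zeta(\hat\Gamma_Y)$ is then not well defined. The paper takes $\hat\Gamma_Y$ to be the graph of $\gamma$ \emph{intersected} with $Y'$, which is a compact subset of $Y'$, and pushes that forward. Second, the lemma asserts an \emph{$n-1$ dimensional} surface, and the paper secures this by choosing a \emph{Lipschitz} interpolant (it gives an explicit max-of-affine-bumps formula and also mentions polynomial interpolation); a merely continuous $h$ obtained via Tietze gives a graph whose topological dimension is $n-1$ but need not control Hausdorff dimension, and Lipschitz regularity of the non-recurrent surface is precisely what underwrites the Lipschitz (hence a.e.\ differentiable) eigenfunctions used elsewhere in the paper. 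Finally, when arguing that $p^1,\dots,p^M$ are pairwise distinct you should invoke the convexity of $Y'$ explicitly: two points of $Y'$ sharing the same first $n-1$ rectified coordinates are only guaranteed to lie on the same $S_t$-trajectory of~(\ref{eq:sys}) when the straight segment joining them stays in $Y'$, so that the conjugacy applies along the whole segment. All of these are repairable without changing the structure of your argument, which otherwise coincides with the paper's.
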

\begin{proof}
Let $y^j = \conjug^{-1}(x^j)$, $j=1,\ldots, M$ and let $\Gamma_Y = \{y^1,\ldots, y^M\} = \conjug^{-1}(\Gamma)$. The goal is to construct an $n-1$ dimensional surface $\hat\Gamma_Y$, closed in $\Rb^n$, passing through the points $\{y^1,\ldots, y^M\}$, i.e., $\hat\Gamma_Y \supset \Gamma_Y$ and satisfying 
\begin{equation}\label{eq:nonRec_proof_aux}
y\in \hat\Gamma_Y \Longrightarrow \hat S_t(y) \notin \hat\Gamma_Y
\end{equation}
for any $t > 0$ satisfying $\hat S_{t'}(y) \in Y'$ for all $t'\in[0,t]$, where $\hat S_{t'}(y)$ denotes the flow of~(\ref{eq:rectified1}). Once $\hat\Gamma_Y$ is constructed, the required surface $\hat\Gamma$ is obtained as $\hat\Gamma = \conjug(\hat\Gamma_Y)$.

Given the nature of the rectified dynamics~(\ref{eq:rectified1}), the condition~(\ref{eq:nonRec_proof_aux}) will be satisfied if $\hat\Gamma_Y$ is a graph of a Lipschitz continuous function $\gamma:\Rb^{n-1}\to \Rb$ such that
\[
\hat\Gamma_Y = \{(y_1,\ldots,y_n) \in Y' \mid y_n = \gamma(y_1,\ldots,y_{n-1})\}.
\]

We shall construct such function $\gamma$. Denote $\bar{y}^j = (y^j_1,\ldots, y^j_{n-1})$ the first $n-1$ components of each point $y^j \in \Rb^n$. The nature of the rectified dynamics~(\ref{eq:rectified1}), convexity of $Y'$ and the fact that $x^j$'s (and hence $y^j$'s) do not lie on the same trajectory implies that $\bar{y}^j$'s are distinct. Therefore, the pairs $(\bar y^j, y^j_n)\in \Rb^{n-1}\times\Rb$, $j=1,\ldots, M$, can be interpolated with a Lipschitz continuous function $\gamma:\Rb^{n-1}\to \Rb$. One such example of $\gamma$ is 
\[
\gamma(y_1,\ldots,y_{n-1}) = \max_{j \in \{1,\ldots,M\}}\Big\{   \big[1- \|(y_1,\ldots,y_{n-1}) -\bar y^j\|\big] y^j_n  \Big\},
\]
where we assume that $y^j_n \ge 0$ (which can be achieved without loss of generality by translating the $y_n$-th coordinate since $Y'$ is compact) and $\|\cdot\|$ is any norm on $\Rb^{n-1}$. Another example is a multivariate polynomial interpolant of degree $d$ which always exists for any $d$ satisfying $\binom{n-1+d}{d} \ge M$. Since both $\conjug$ and $\gamma$ are Lipschitz continuous, the surface $\hat \Gamma$ is $n-1$ dimensional and is closed in the standard topology of $\Rb^n$.
\end{proof}

\begin{remark}[Rectifyability] It is a well-known fact that for the  dynamics~(\ref{eq:sys}) to be rectifyable on a  domain $X'$, \new{the vector field $f$ should be non-singular on this domain (i.e., $f(x) \ne 0$ for all $x\in X'$)}; see, e.g., \cite[Chapter 2, Corollary~12]{arnoldODEs}.

\end{remark}

\subsection{Span of the eigenfunctions}\label{sec:density}
A crucial question arises: can one approximate an arbitrary continuous function by a linear combination of the eigenfunctions constructed using the approach described in Section~\ref{sec:nonRecur} by selecting more and more boundary functions $g$ and eigenvalues $\lambda$? Crucially for our application, if this is the case, we can make the projection error~(\ref{opt:proj}) and thereby also the prediction error~(\ref{eq:predError}) arbitrarily small by enlarging the set of eigenfunctions $\bs \phi$. If this is the case, does one have to enlarge the set of eigenvalues or does it suffice to only increase the number of boundary functions $g$? In this section we give a precise answer to these questions.

Before we do so, we set up some notation. Given any set $\Lambda \subset \Cb$, we define
\begin{equation}\label{eq:mesh}
\lattice(\Lambda) = \Big\{ \sum_{k=1}^{p} \alpha_k \lambda_k \mid \lambda_k \in \Lambda,\; \alpha_k\in \Nb_0, \; p\in \Nb \Big\}.
\end{equation}
A basic result in the Koopman operator theory asserts that if $\Lambda$ is a set of eigenvalues of the Koopman operator, then so is $\lattice(\Lambda) $. Now, given $\Lambda \subset \Cb$ and $G\subset \C(\Gamma)$, we define
\begin{equation}\label{eq:PhiLamG}
\Phi_{\Lambda,G} =  \{ \phi_{\lambda,g} \mid \lambda  \in \Lambda, \; g\in G  \},
\end{equation}
where $\Phi_{\lambda,g}$ is given by~(\ref{eq:phi_def}). In words, $\phi_{\Lambda,G} $ is the set of all eigenfunctions arising from all combinations of boundary functions in $G$ and eigenvalues in $\Lambda$ using the procedure described in Section~\ref{sec:nonRecur}.

Now we are ready to state the main result of this section:


\begin{theorem}\label{thm:mainDensityThm}
Let $\Gamma$ be a non-recurrent set, closed in the standard topology of $\Rb^n$ and let $\Lambda_0 \subset \mathbb{C}$ be an arbitrary\footnote{In the extreme case, the set $\Lambda_0$ can consists of a single non-zero real number or a single conjugate pair with non-zero real part.} set of complex numbers such that at least one has a non-zero real part and $\Lambda_0 = \bar\Lambda_0$. Set $\Lambda = \lattice(\Lambda_0)$ and let $G = \{g_i\}_{i=1}^\infty$ denote an arbitrary set of functions whose span is dense in $\C(\Gamma)$ in the supremum norm. Then the span of $ \Phi_{\Lambda,G} $ is dense in $\C(X_T)$, i.e., for every $h \in \C(X_T)$ and any $\epsilon > 0$ there exist eigenfunctions $\phi_1,\ldots, \phi_N \in \Phi_{\Lambda,G} $ and complex numbers $c_1,\ldots,c_N$ such that
\[
\sup_{x\in X_T} \left| h(x) - \sum_{i=1}^N c_i \phi_i(x)   \right| < \epsilon.
\]
\end{theorem}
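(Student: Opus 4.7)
The plan is to exploit non-recurrence of $\Gamma$ to identify $X_T$ with the Cartesian product $[0,T]\times \Gamma$ via the flow map, under which the eigenfunctions become tensor products $(t,x_0)\mapsto e^{\lambda t}g(x_0)$, and then reduce the density question to two independent one-variable density statements that are handled by hypothesis and by complex Stone--Weierstrass respectively.

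First I would set $\Psi : [0,T]\times \Gamma \to X_T$ by $\Psi(t,x_0) = S_t(x_0)$. Continuity of the flow makes $\Psi$ continuous, surjectivity is immediate from the definition of $X_T$, and non-recurrence of $\Gamma$ on $[0,T]$ gives injectivity: if $S_t(x_0) = S_s(x_0')$ with $0\le t\le s\le T$ and $x_0,x_0'\in \Gamma$, then $S_{s-t}(x_0') = x_0 \in \Gamma$, forcing $s=t$ (otherwise $s-t\in(0,T]$ contradicts non-recurrence) and hence $x_0' = x_0$. Under compactness of $\Gamma$ (which we may assume, e.g.\ by restricting to a compact subset, as otherwise the sup-norm formulation requires a standard bounded-continuous reformulation), $\Psi$ is a continuous bijection from a compact space to a Hausdorff space, hence a homeomorphism. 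The pullback $h\mapsto h\circ \Psi$ is therefore a linear isometry $C(X_T)\to C([0,T]\times \Gamma)$, and by construction $\phi_{\lambda,g}\circ \Psi (t,x_0) = e^{\lambda t}g(x_0)$.

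Second I would reduce the problem to two one-dimensional density statements. Setting $E_\Lambda = \{t\mapsto e^{\lambda t} : \lambda \in \Lambda\}$, the pullback of $\mathrm{span}(\Phi_{\Lambda,G})$ equals $\mathrm{span}\{e^{\lambda t}g(x_0) : \lambda\in \Lambda,\, g\in G\}$. A standard tensor-product density argument -- first approximate each $f\in C([0,T])$ by an element of $\mathrm{span}(E_\Lambda)$ and each $h\in C(\Gamma)$ by an element of $\mathrm{span}(G)$ to see that every elementary tensor $f(t)h(x_0)$ lies in the sup-norm closure of $\mathrm{span}\{e^{\lambda t}g(x_0)\}$, and then invoke Stone--Weierstrass on the algebra of such tensor products (which separates points of $[0,T]\times \Gamma$, contains the constants, and is closed under conjugation) -- reduces the claim to proving (a) $\mathrm{span}(G)$ is dense in $C(\Gamma)$, which is given, and (b) $\mathrm{span}(E_\Lambda)$ is dense in $C([0,T];\mathbb{C})$.

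Third I would prove (b) via complex Stone--Weierstrass. Because $\Lambda = \lattice(\Lambda_0)$ is closed under addition, $e^{\lambda_1 t}e^{\lambda_2 t}=e^{(\lambda_1+\lambda_2)t}\in E_\Lambda$, so $\mathrm{span}(E_\Lambda)$ is a subalgebra of $C([0,T];\mathbb{C})$. Taking all coefficients zero in the definition of $\lattice$ shows $0\in \Lambda$, whence $1 = e^{0\cdot t}\in E_\Lambda$. The hypothesis $\Lambda_0 = \bar \Lambda_0$ propagates through the nonnegative-integer combinations in (\ref{eq:mesh}) to give $\Lambda = \bar\Lambda$, ensuring closure under complex conjugation. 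Finally, picking any $\lambda_\star \in \Lambda_0$ with $\mathrm{Re}(\lambda_\star)\neq 0$, the function $t\mapsto |e^{\lambda_\star t}| = e^{\mathrm{Re}(\lambda_\star)t}$ is strictly monotone and thus separates the points of $[0,T]$. Complex Stone--Weierstrass then closes the argument. The main obstacle, in my view, is not any single step but making the parametrization $\Psi$ rigorously a homeomorphism: without non-recurrence, $\Psi$ is not injective and the pull-back identification collapses; without (at least local) compactness, it is not a homeomorphism and sup-norm density in the product fails to transfer back to $X_T$. These hypotheses do all the essential work.
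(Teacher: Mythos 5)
Your proof is correct and takes a genuinely different route from the paper's. The paper verifies the Stone--Weierstrass conditions directly on $X_T$ for the space $\Phi_\Lambda = \mathrm{span}\{\phi_{\lambda,g}: \lambda\in\Lambda,\ g\in \C(\Gamma)\}$: it first absorbs the boundary-function approximation (your step (a)) into a separate reduction, and then checks closure under multiplication (via the lattice structure), complex conjugation, point separation (with a case split depending on whether $x_1,x_2$ lie on the same trajectory), and constants, all on $X_T$ itself. You instead make the flow map $\Psi(t,x_0)=S_t(x_0)$ an explicit homeomorphism $[0,T]\times\Gamma \to X_T$, under which the eigenfunctions become elementary tensors $e^{\lambda t}g(x_0)$, and then split the problem into two independent one-variable density questions solved separately. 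What your approach buys is modularity and transparency: the two-case point-separation argument in the paper is precisely the shadow of your product decomposition (same trajectory $\leftrightarrow$ same $\Gamma$-coordinate, handled by the $t$-factor; different trajectories $\leftrightarrow$ different $\Gamma$-coordinates, handled by the $\Gamma$-factor), and your version makes this structural fact explicit, while the exponential-density lemma on $[0,T]$ is clean and reusable. What it costs is the need to establish that $\Psi$ is a homeomorphism, which forces you to invoke compactness of $\Gamma$ (hence of $X_T$) explicitly; the paper's direct argument also needs $X_T$ compact to apply Stone--Weierstrass but never needs the bijective parametrization per se, so your version has one more hypothesis made visible. You are also right to flag that non-recurrence is exactly what makes $\Psi$ injective and hence what makes the product picture (and the whole construction) work; this is a genuine insight that the paper leaves implicit.
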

\new{
\begin{proof}
 \textbf{Step 1}. First, we observe that it is sufficient to prove the density of
\[
\Phi_{\Lambda} = \mr{span}\{ \phi_{\lambda,g} \mid \lambda  \in \Lambda, \; g\in \C(\Gamma)  \}
\]
in $\C(X_T)$. To see this, assume $\Phi_{\Lambda} $ is dense in $\C(X_T)$  and consider any function $h \in \C(X_T)$ and $\epsilon > 0$. Then there exist eigenfunctions $\phi_{\lambda_i,g_i}$, $i=1,\ldots,k$, defined by~(\ref{eq:phi_def}) with $g_i\in \C(\Gamma)$ and $\lambda_i \in \Lambda$ such that
\[
 \sup_{x\in X_T} \Big | \sum_{i=1}^k \phi_{\lambda_i,g_i}(x) - h(x) \Big | < \epsilon.
\] 
Here we used the linearity of (\ref{eq:phi_def}) with respect to $g$ to subsume the coefficients of the linear combination to the functions $g_i$.

Since $\mr{span}\{G\}$ is dense in $\C(\Gamma)$, there exist functions $\tilde g_i \in \mr{span}\{G\}$ such that \[
\sup_{x\in \Gamma} |g_i - \tilde g_i | < \frac{\epsilon}{k}\min\{1,|e^{\lambda_i T}|\} .\]
In addition, because Eq. (\ref{eq:phi_def}) defining $\phi_{\lambda,g}$~is linear in $g$ for any fixed $\lambda$, it follows that $\phi_{\lambda_i,\tilde g_i} \in \mr{span}\{\Phi_{\Lambda,G}\}$ and hence also $\sum_{i=1}^k \phi_{\lambda_i,\tilde g_i} \in \mr{span}\{\Phi_{\Lambda,G}\}$. Therefore it suffices to bound the error between $h$ and $\sum_{i=1}^k\phi_{\lambda_i,\tilde g_i}$. We have
\begin{align*}
\sup_{x\in X_T} \left|\sum_{i=1}^k \phi_{\lambda_i,\tilde g_i}(x) - h(x) \right|  & \le  \sup_{x\in X_T}\left|\sum_{i=1}^k\phi_{\lambda_i, g_i}(x) - h(x) \right|  + \sup_{x\in X_T} \left| \sum_{i=1}^k (\phi_{\lambda_i, g_i}(x) - \phi_{\lambda_i,\tilde g_i}(x)) \right| \\ & \le \epsilon + \sup_{x\in X_T} \left|\sum_{i=1}^k e^{-\lambda_i\tau(x)}[g(S_{\tau(x)}(x)) - \tilde g_i(S_{\tau(x)}(x))] \right | \\ & \le 
\epsilon + \sum_{i=1}^k \left[    \sup_{x\in X_T}|e^{-\lambda_i\tau(x)}| \cdot \sup_{x\in \Gamma} |g_i(x) - \tilde g_i(x)| \right] \\& \le \epsilon + \frac{\epsilon}{k} \sum_{i=1}^k \max\{1,|e^{-\lambda_i T}|\} \min\{1,|e^{\lambda_i T}|\}  \le 2\epsilon,
\end{align*}
where we used the facts that $\tau(x)\in[0,T]$ and $S_{\tau(x)}(x) \in \Gamma$.

\textbf{Step 2}.  We will show that $\Phi_{\Lambda}$ is a subalgebra of $\C(X_T)$ closed under complex conjugation that separates points and contains a non-zero constant function; then the Stone–Weierstrass theorem will imply the desired results. By construction, $\Phi_\Lambda$ is a linear subspace of $\C(X_T)$ and hence it suffices to show that $\Phi_\Lambda$ is closed under multiplication and complex conjugation, separates points and contains a non-zero constant function.

To see that that $\Phi_\Lambda$ is closed under multiplication, consider $\phi_1 \in \Phi_\Lambda$ and $\phi_2 \in \Phi_\Lambda$ of the form $\phi_1 = \sum_{i} \phi_{\lambda_i,g_i}$ and $\phi_1 = \sum_{i}  \phi_{\lambda_i',g_i'}$ with $\lambda_i\in \Lambda$ and $\lambda_i'\in \Lambda$, $g_i \in \C(\Gamma)$, $g_i' \in \C(\Gamma)$. Then we have
\[
\phi_1\phi_2 = \sum_{i,j}   \phi_{\lambda_i,g_i} \phi_{\lambda_j',g_j'} = \sum_{i,j} e^{-\lambda_i \tau}e^{-\lambda_j' \tau}   (g_i \circ S_{\tau})  (g_j' \circ S_{\tau}) = \sum_{i,j} \phi_{\lambda_i+\lambda_j',g_ig_j'} \in \Phi_\Lambda  
\]
since $\lambda_i + \lambda_j' \in \Lambda$ because of~(\ref{eq:mesh}) and $g_i'g_j' \in \C(\Gamma)$.

To see that $\Phi_\Lambda$ separates points of $X_T$, i.e., for each $x_1 \in X_T$ and $x_2 \in X_T$, $x_1\ne x_2$, there exists $\phi \in \Phi_\Lambda$ such that $\phi(x_1) \ne \phi(x_2)$. We consider two cases. First, suppose that $x_1$ and $x_2$ lie on the same trajectory of~(\ref{eq:sys}). Then these two points are separated by $\phi_{\lambda,1}$ for any $\lambda$ with nonzero real part; by the assumptions of the theorem there exists $\lambda \in \Lambda$ with a non-zero real part and hence the associated $\phi_{\lambda,1}$ belongs to $\Phi_\Lambda$. Second, suppose $x_1$ and $x_2$ do not lie on the same trajectory. Then these two points are separated by $\phi_{0,g}$ with $g(S_{\tau(x_1)}(x_1)) \ne g(S_{\tau(x_2)}(x_2))$; such $g$ always exists in $\C(\Gamma)$ because $\C(\Gamma)$ separates points of~$\Gamma$.

To see that $\Phi_\Gamma$ contains a constant non-zero function, consider $\phi_{\lambda,g} $ with $\lambda = 0$ and $g = 1$, which is equal to $1$ on $X_T$ and belongs to $\Phi_\Lambda$.

Finally, to see that $\Phi_\Lambda$ is closed under complex conjugation, consider $\phi = \sum_i \phi_{\lambda_i,g_i}$, $\lambda_i\in \Lambda$ and $g_i \in \C(\Gamma)$. Then
\[
\bar\phi = \sum_i \overline{\phi_{\lambda_i,g_i}}  = \sum_i e^{-\bar \lambda_i \tau }\overline{g_i} \circ S_{\tau} = \sum_i \phi_{\bar\lambda_i, \overline{g_i}} \in \Phi_\Lambda
\]
since $\bar\Lambda = \Lambda$ by assumption and $\C(\Gamma) = \overline{\C(\Gamma)}$.
\end{proof}
}

\paragraph{Selection of $\lambda$'s and $g$'s}
\new{An interesting question arises regarding an optimal selection of the eigenvalues $\lambda\in \Lambda$ and boundary functions $g \in G$ assuring that the projection error~(\ref{opt:proj}) converges to zero as fast as possible. As it turns, the boundary functions $g$ can be chosen optimally using convex optimization, for each component of $\ob$ separately. The optimal choice of the eigenvalues $\lambda$ appears to be more difficult and seems to be inherently non-conconvex. Choices of both $g$'s and $\lambda$'s using optimization are discussed in detail in Sections~\ref{sec:optimChoice_g} and~\ref{sec:optimChoice_lam}. Importantly, the algorithms for selection of $g$'s and $\lambda$'s do not rely on any problem insight and do not require the choice of basis functions.
}

\paragraph{\new{Example (role of $\lambda$'s)}} To get some intuition to the interplay between $\ob$ and $\lambda$, consider the linear system $\dot{x} = ax$, $x\in [0,1]$ and the non-recurrent set $\Gamma = \{1\}$. In this case, any function on $\Gamma$ is constant, so the set of boundary functions $G$ can be chosen to consist of the constant function equal to one. Then it follows from~(\ref{eq:phi_def}) that given any complex number $\lambda \in \Cb$, the associated eigenfunction is $\phi_\lambda(x) := \phi_{\lambda,1}(x) = x^{\frac{\lambda}{a}}$. Given an observable $\ob$, the optimal choice of the set of eigenvalues $\Lambda\subset \Cb$ is such that the projection error~(\ref{opt:proj}) is minimized, which in this case translates to making
\begin{equation}\label{eq:projErrExample}
\min_{(c_\lambda\in\Cb)_{\lambda\in \Lambda}} \Big\|\ob -\sum_{\lambda\in\Lambda}c_\lambda x^{\frac{\lambda}{a}}\Big\|
\end{equation}
as small as possible with the choice of $\Lambda $. Clearly, the optimal choice (in terms of the number of eigenfunctions required to achieve a given projection error) of $\Lambda$ depends on $\ob$. For example, for $\ob = x$, the optimal choice is $\Lambda = \{a\}$, leading to a zero projection error with only one eigenfunction. For other observables, however, the choice of $\Lambda = \{a\}$ need not be optimal. For example, for $\ob = x^b$, $b\in \Rb$, the optimal choice leading to zero projection error with only one eigenfunction is $\Lambda = \{a\cdot b\}$. The statement of Theorem~\ref{thm:mainDensityThm} then translates to the statement that the projection error~(\ref{eq:projErrExample}) is zero for any $\Lambda = \{k\cdot \lambda_0\mid k\in \Nb_0 \}$ with $\lambda_0 < 0$ and any continuous observable $\ob$. The price to pay for this level of generality is the asymptotic nature of the result, requiring the cardinality of $\Lambda$ (and hence the number of eigenfunctions) going to infinity. 

\subsection{Generalized eigenfunctions}\label{sec:genEfun}
This section describes how \emph{generalized eigenfunctions} can be constructed with a simple modification of the proposed method. Importantly for this work, generalized eigenfunctions also give rise to Koopman invariant subspaces and therefore can be readily used for linear prediction. Given a complex number $\lambda$ and $g_1,\ldots, g_{n_\lambda}$, consider the Jordan block
\[
 J_\lambda =  \begin{bmatrix}
    \lambda & 1 & \\
    & \ddots & 1 \\
    & & \lambda
  \end{bmatrix}\, 
\]

and define

\begin{equation}\label{eq:genEig_def}
\begin{bmatrix}
\psi_{\lambda,g_1}(S_t(x_0)) \\ \vdots \\ \psi_{\lambda,g_{n_\lambda}}(S_t(x_0))
\end{bmatrix} = e^{J_\lambda t} \begin{bmatrix}g_1(x_0)\\\vdots\\ g_{n_\lambda}(x_0) \end{bmatrix}  
\end{equation}
for all $x_0\in\Gamma$ or equivalently
\begin{equation}\label{eq:genEig_def_1}
\begin{bmatrix}
\psi_{\lambda,g_1}(x) \\ \vdots \\ \psi_{\lambda,g_{n_\lambda}}(x)
\end{bmatrix} = e^{-J_\lambda \tau(x)} \begin{bmatrix}g_1(S_{\tau(x)}(x))\\\vdots\\ g_{n_\lambda}(S_{\tau(x)}(x)) \end{bmatrix}  
\end{equation}
for all  $x\in X_T$. Define also
\[
\bs\psi = \begin{bmatrix}\psi_{\lambda,g_1} ,   \ldots ,  \psi_{\lambda,g_{n_\lambda}}
\end{bmatrix}^\top.
\]
With this notation, we have the following theorem:
\begin{theorem}\label{thm:genEig}
Let $\Gamma$ be a non-recurrent set, $g_i\in \C(\Gamma)$, $i=1,\ldots,n_\lambda$, and $\lambda \in \Cb$. Then the subspace \[\mr{span}\{\psi_{\lambda,g_1},\ldots,\psi_{\lambda,g_{n_\lambda}}\} \subset \C(X_T)\] is invariant under the action of the Koopman semigroup $\Kc_t$. Moreover
\begin{equation}\label{eq:bsPsi_eq}
\bs\psi(S_t(x)) = e^{J_\lambda t}\bs\psi(x)
\end{equation}
and
\begin{equation}\label{eq:bsPsi_eq_diff}
\frac{d}{dt}\bs \psi(S_t(x)) = J_\lambda\bs\psi(S_t(x))
\end{equation}
for any $x \in X_T$ and any $t\in[0,T]$ such that $S_{t'}(x) \in X_T $ for all $t' \in [0,t]$.
\end{theorem}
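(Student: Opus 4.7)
The plan is to prove the three claims in reverse order of difficulty: the identity~(\ref{eq:bsPsi_eq}) does the real work, from which (\ref{eq:bsPsi_eq_diff}) follows by differentiation and Koopman invariance of the span follows by reading off the linear combinations encoded in $e^{J_\lambda t}$.

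First I would establish (\ref{eq:bsPsi_eq}). The idea is to use the non-recurrent property of $\Gamma$ to compute $\tau(S_t(x))$ in terms of $\tau(x)$. Given $x \in X_T$ and a $t\in[0,T]$ for which the entire forward segment $\{S_{t'}(x) : t'\in[0,t]\}$ lies in $X_T$, set $y = S_t(x)$ and $x_0 = S_{\tau(x)}(x) \in \Gamma$. Since $x_0 = S_{\tau(x) - t}(y)$, we have one admissible ``hit time'' of $\Gamma$ from $y$ equal to $\tau(x) - t \in [-T, 0]$. By the definition of $\tau(y)$ as an infimum, there is also $\tau(y) \in [-T,0]$ with $S_{\tau(y)}(y)\in\Gamma$. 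I would argue $\tau(y) = \tau(x) - t$ as follows: both times lie in $[-T,0]$, so their positive difference lies in $(0,T]$; if they differed, applying the flow for that positive difference would send one point of $\Gamma$ to another point of $\Gamma$, contradicting non-recurrence. Once this identification is in hand, plugging into definition~(\ref{eq:genEig_def_1}) yields
\begin{align*}
\bs\psi(S_t(x)) &= e^{-J_\lambda \tau(y)}\,\bs g\bigl(S_{\tau(y)}(y)\bigr) = e^{-J_\lambda(\tau(x)-t)}\,\bs g\bigl(S_{\tau(x)}(x)\bigr) \\
&= e^{J_\lambda t}\, e^{-J_\lambda \tau(x)}\,\bs g\bigl(S_{\tau(x)}(x)\bigr) = e^{J_\lambda t}\,\bs\psi(x),
\end{align*}
where $\bs g = (g_1,\ldots,g_{n_\lambda})^\top$. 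This is (\ref{eq:bsPsi_eq}).

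Next, (\ref{eq:bsPsi_eq_diff}) is immediate: since $\bs\psi(x)$ does not depend on $t$, differentiating (\ref{eq:bsPsi_eq}) with respect to $t$ gives $\frac{d}{dt}\bs\psi(S_t(x)) = J_\lambda e^{J_\lambda t}\bs\psi(x) = J_\lambda \bs\psi(S_t(x))$. Finally, Koopman invariance of $V \defeq \mr{span}\{\psi_{\lambda,g_1},\ldots,\psi_{\lambda,g_{n_\lambda}}\}$ follows by reading off (\ref{eq:bsPsi_eq}) componentwise: $(\Kc_t \psi_{\lambda,g_i})(x) = \psi_{\lambda,g_i}(S_t(x))$ is the $i$-th component of $e^{J_\lambda t}\bs\psi(x)$, which is a (finite) linear combination of $\psi_{\lambda,g_1},\ldots,\psi_{\lambda,g_{n_\lambda}}$ with coefficients given by the $i$-th row of $e^{J_\lambda t}$; hence $\Kc_t V \subset V$.

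The only genuinely delicate step is the identification $\tau(S_t(x)) = \tau(x) - t$, where the non-recurrence of $\Gamma$ on the interval $[0,T]$ together with the hypothesis that the trajectory stays in $X_T$ are both essential; the rest is bookkeeping with the matrix exponential of $J_\lambda$.
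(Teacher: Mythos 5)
Your proof is correct and takes essentially the same route as the paper: establish (\ref{eq:bsPsi_eq}) by direct computation from the defining formula (\ref{eq:genEig_def_1}), then read off the Koopman invariance from the rows of $e^{J_\lambda t}$ and obtain (\ref{eq:bsPsi_eq_diff}) by differentiation. You add one useful piece of rigor that the paper leaves implicit — the explicit verification via non-recurrence that $\tau(S_t(x)) = \tau(x) - t$ (the paper's first equality uses this identification without comment).
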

\begin{proof}
Let $h \in \mr{span}\{\psi_{\lambda,g_1},\ldots,\psi_{\lambda,g_{n_\lambda}}\} $. Then $h = c^\top \bs \psi$ for some $c\in \Cb^{n_\lambda}$.
Given $x \in X_T$, we have
\[
\bs\psi(S_t(x)) = e^{-J_\lambda(\tau(x)-t)} \begin{bmatrix}g_1(S_{\tau(x)-t}(S_t(x)))\\\vdots\\ g_{n_\lambda}(S_{\tau(x)-t}(S_t(x))) \end{bmatrix} =  e^{J_\lambda t } e^{-J_\lambda(\tau(x))} \begin{bmatrix}g_1(S_{\tau(x)}(x))\\\vdots\\ g_{n_\lambda}(S_{\tau(x)}(x)) \end{bmatrix} =e^{J_\lambda t } \bs\psi(x),
\]
which is~(\ref{eq:bsPsi_eq}).
Therefore $\Kc_t h = c^\top\bs\psi\circ S_t = c^\top e^{tJ_\lambda} \bs\psi  \in \mr{span}\{\psi_{\lambda,g_1},\ldots,\psi_{\lambda,g_{n_\lambda}}\} $ as desired. Eq.~(\ref{eq:bsPsi_eq_diff}) follows immediately from~(\ref{eq:bsPsi_eq}).
\end{proof}

\paragraph{Beyond Jordan blocks}
The proof of Theorem~\ref{thm:genEig} reveals that there was nothing special of using a Jordan block in~(\ref{eq:genEig_def}). Indeed, the entire construction works with an \emph{arbitrary} matrix $A$ in place of~$J_\lambda$. However, nothing is gained by using an arbitrary matrix $A$ since the span of the generalized eigenfunctions constructed using $A$ is identical to that of the corresponding Jordan normal form of $A$, which is just the direct sum of the spans associated to the individual Jordan blocks.

\section{Learning eigenfunctions from data}\label{sec:learning}
\new{Now we use the construction of Section~\ref{sec:nonRecur} to learn eigenfunction from data. In particular, we leverage the freedom in \emph{choosing} (optimally, if possible) the eigenvalues $\lambda$ as well as the boundary functions $g$ to learn a \emph{rich} set of eigenfunctions such that the projection error~(\ref{opt:proj}) (and thereby also the prediction error~(\ref{eq:predError})) is minimized. The choice of $g$'s and $\lambda$'s is carried out using numerical optimization (convex in the case of $g$'s), without relying on problem insight and without requiring a choice of basis. We first describe how the eigenfunctions can be constructed from data, assuming the eigenvalues and boundary functions have been chosen and after that we describe how this choice can be carried out using numerical optimization.
}

Throughout this section we assume that we have available data in the form of $M_{\mr{t}}$ distinct equidistantly sampled trajectories with $M_{\mr{s}} +1$ samples each, where  $M_{\mr{s}} = T/ T_{\mr{s}} $ with $T_{\mr{s}}$ being the sampling interval (what follows straightforwardly generalizes to non-equidistantly sampled trajectories of unequal length). That is, the data is of the form
\begin{equation}\label{eq:data_uncont}
\D = \Big( (x_k^j)_{k=0}^{M_{\mr{s}}}  \Big)_{j=1}^{M_{\mr{t}}},
\end{equation}
where the superscript indexes the trajectories and the subscript the discrete time within the trajectory, i.e., $x_k^j = S_{kT_{\mr{s}}}(x_0^j)$, where $x_0^j$ is the initial condition of the $j^{\mr{th}}$ trajectory. The non-recurrent set $\Gamma$ is simply defined as
\[
\Gamma = \{x_0^1,\ldots, x_0^{M_{\mr{t}}}\}.
\]

We also assume that we have chosen a vector of complex numbers (i.e., the eigenvalues)
\[
\Lambda = (\lambda_1,\ldots,\lambda_{N})
\]
as well as a vector of continuous functions
\[
G = (g_1,\ldots, g_{N})
\]
defining the values of the eigenfunctions on the non-recurrent set $\Gamma$. The functions in $G$ will be referred to as \emph{boundary functions}.

Now we can construct $N$ eigenfunctions using the developments of Section~\ref{sec:nonRecur}. To do so, define the matrix
\begin{equation}\label{eq:G}
\mathbf{G}(i,j) = g_i(x_0^j)
\end{equation}
collecting the values of the boundary functions on the initial points of the trajectories in the data set. Define also \[
\phi_{\lambda_i,g_i}(x_0^j) := g_i(x_0^j),\quad j=1,\ldots, M_{\mr{t}}.
\]
Then Eq.~(\ref{eq:phi_comp_def}) uniquely defines the values of $\phi_{\lambda_i,g_i}$ on the entire data set. Specifically, we have
\begin{equation}\label{eq:eigFun_computed}
\phi_{\lambda_i,g_i}(x_k^j) = e^{\lambda_i k T_{\mr{s}}} \mathbf{G}(i,j)
\end{equation}
for all $k \in \{0,\ldots M_{\mr{s}} \}$ and all $j \in \{1,\ldots M_{\mr{t}} \}$. According to Lemma~\ref{lem:nonRecurSurface} (provided its assumptions hold), there exists an entire non-recurrent surface $\hat\Gamma$ passing through the initial conditions of the trajectories in the data set $\D$. Even though this surface is unknown to us, its existence implies that the eigenfunctions computed through~(\ref{eq:eigFun_computed}) on $\D$ are in fact samples of continuous eigenfunctions defined on \begin{equation}\label{eq:hatXT}
\hat X_T = \bigcup_{t\in [0,T]} S_t(\hat\Gamma)\,;
\end{equation}
see Figure~\ref{fig:nonRecur_data} for an illustration.
\begin{figure*}[tb]
	\begin{picture}(50,155)
		\put(80,0){\includegraphics[width=130mm]{./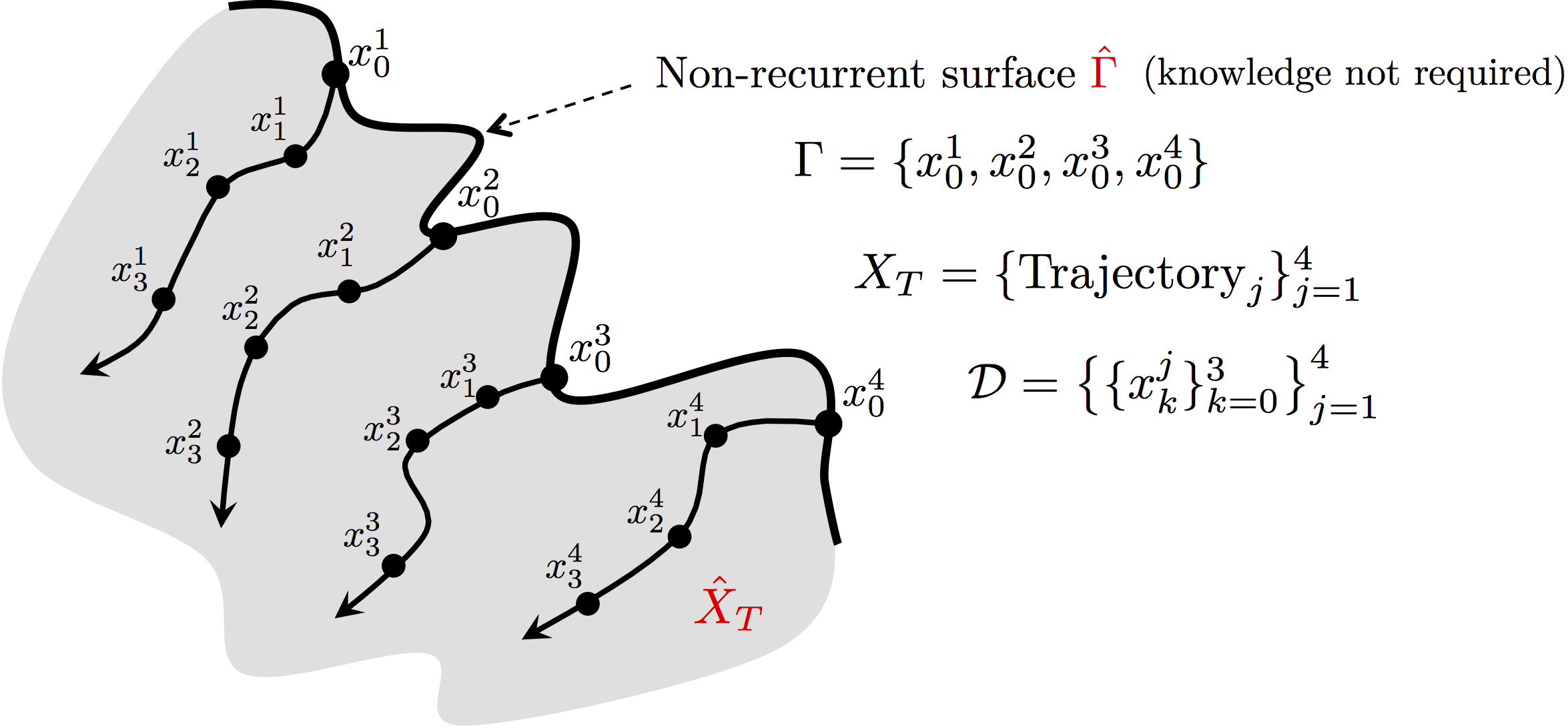}}
	\end{picture}
	\caption{\footnotesize Illustration of the non-recurrent set $\Gamma$, the non-recurrent surface $\hat\Gamma$. Note that the non-recurrent surface $\hat\Gamma$ does not need to be known explicitly for learning the eigenfunctions. Only sampled trajectories~$\D$ with initial conditions belonging to distinct trajectories are required (see Lemma~\ref{lem:nonRecurSurface}). Note also that even though the existence of the non-recurrent surface is assured by Lemma~\ref{lem:nonRecurSurface}, this surface can be highly irregular (e.g., oscillatory), depending on the interplay between the dynamics and the locations of the initial conditions.}
	\label{fig:nonRecur_data}
\end{figure*}
As a result, the eigenfunctions $\phi_{\lambda_i,g_i}$ can be learned on the entire set $\hat X_T $ (or possibly even larger region) via interpolation or approximation. Specifically, given a set of basis functions 
\[
\boldsymbol \beta = \begin{bmatrix} \beta_1\\\vdots\\ \beta_{N_{\beta}}\end{bmatrix}
\]
with $\beta_i \in \C(X)$, we can solve the interpolation problems 
\begin{equation}\label{opt:interp}
\begin{array}{ll}
\underset{c \in \Cb^{N_\beta}}{\mbox{minimize}} & \delta_1 \| c \|_1 + \|c\|_2^2  \\
\mbox{subject to} &  c^\top\boldsymbol \beta (x_k^j) = \phi_{\lambda_i,g_i}(x_k^j),\\
& k\in \{0,\ldots,M_{\mr{s}}\},\;\; j\in \{1,\ldots,M_{\mr{t}}\} \\
\end{array}
\end{equation}
for each $i\in\{1,\ldots,N\}$. Alternatively, we can solve the approximation problems
\begin{equation}\label{opt:regression}
\begin{array}{ll}
\underset{c \in \Cb^{N_\beta}}{\mbox{minimize}} &  \sum_{k=0}^{\mr{M_{\mr{s}}}} \sum_{j=1}^{M_\mr{t}}
\big| c^\top\bs\beta (x_k^j) - \phi_{\lambda_i,g_i}(x_k^j)\big|^2 +    \delta_1\| c \|_1 + \delta_2 \| c\|_2^2 \\
\end{array}
\end{equation}
for each $i\in\{1,\ldots,N\}$. In both problems the classical $\ell_1$ and $\ell_2$ regularizations are optional, for promoting sparsity of the resulting approximation and preventing overfitting; the numbers $\delta_1 \ge 0$, $\delta_2 \ge 0$ are the corresponding regularization parameters. The resulting approximation to the eigenfunction $\phi_{\lambda_i,g_i}$, denoted by $\hat\phi_{\lambda_i,g_i}$, is given by
\begin{equation}
\hat\phi_{\lambda_i,g_i}(x) = c_{i}^\top \bs\beta(x),
\end{equation}
where $c_{i}^\top $ is the solution to~(\ref{opt:interp}) or (\ref{opt:regression}) for a given $i\in\{1,\ldots,N\}$.  Note that the approximation $\hat\phi_{\lambda_i,g_i}(x)$ is defined on the entire state space $X$; if the interpolation method~(\ref{opt:interp}) is used then the approximation is exact on the data set $\D$ and one expects it to be accurate on $X_T$ and possibly also on $\hat X_T$, provided that the non-recurrent surface $\hat \Gamma$ (if it exists) and the functions in $G$ give rise to eigenfunctions well approximable (or learnable) by functions from the set of basis functions $\bs\beta$.  The eigenfunction learning procedure is summarized in Algorithm~\ref{alg:eigFunLerning}.

\begin{algorithm}
\caption{Eigenfunction learning}\label{alg:eigFunLerning}
\begin{algorithmic}[1]
\Require Data $\D = \Big( (x_k^j)_{k=0}^{M_{\mr{s}}}  \Big)_{j=1}^{M_{\mr{t}}}$, matrix $\mathbf{G} \in \Cb^{N\times M_{\mr{t}}}$, complex numbers $\Lambda=(\lambda_1,\ldots,\lambda_{N_\Lambda} )$, basis functions $\bs\beta = [\beta_1,\ldots,\beta_{N_\beta}]^\top$, sampling time $T_\mr{s}$.
\For{$i\in \{1,\ldots,N\}$}
 \For{$j \in \{1,\ldots,M_{\mr{t}}\}$, $k \in \{0,\ldots,M_{\mr{s}}\}$}
 \State $\phi_{\lambda,g}(x_k^j) := e^{\lambda k T_{\mr{s}}} \mathbf{G}(i,j)$
\EndFor
\State Solve~(\ref{opt:interp}) or (\ref{opt:regression}) to get $c_{i}$
\State Set $\hat\phi_{i} := c_{i}^\top \bs\beta$ 
\EndFor
\Ensure $\{\hat \phi_{i}\}_{i\in\{1,\ldots,N\}}$
\end{algorithmic}
\end{algorithm}

\new{\paragraph{Interpolation methods} We note that~(\ref{opt:interp}) and (\ref{opt:regression}) are only two possibilities for extending the values of the eigenfunctions from the data points to all of $X$; more sophisticated interpolation/approximation methods may yield superior results.}

\paragraph{Choosing initial conditions} As long as the initial conditions in $\Gamma$ lie on distinct trajectories that are non-periodic over the simulated time interval, the set $\Gamma$ is non-recurrent as required by our approach. This is achieved with probability one if, for example, the initial conditions are sampled uniformly at random over $X$ (assuming the cardinality of $X$ is infinite) and the dynamics is non-periodic or the simulation time is chosen such that the trajectories are non-periodic over the simulated time interval. In practice, one will typically choose the initial conditions such that the trajectories sufficiently cover a subset of the state space of interest (e.g., the safe region of operation of a vehicle). In addition, it is advantageous (but not necessary) to sample the initial conditions from a sufficiently regular surface (e.g., a ball or ellipsoid) approximating a non-recurrent surface in order to ensure that the resulting eigenfunctions are well behaved (e.g., in terms of the Lipschitz constant) and hence easily interpolable / approximable.

\new{\subsection{Optimal selection of boundary functions $g$}\label{sec:optimChoice_g} 
First we describe the general idea behind the optimal selection of $g$'s and then derive from it a convex-optimization-based algorithm. Given $\lambda \in \Cb$, let $\mathcal{L}_\lambda : \C(\Gamma)\to \C(X_T)$ denote the operator that maps a boundary function $g \in \C(\Gamma)$ to the associated eigenfunction $\phi_{\lambda,g}$, i.e.,
\[
\mathcal{L}_\lambda g = e^{-\lambda\tau} (g\circ S_{\tau} ).
\]
Notice, crucially, that this operator is linear in $g$. Given a vector of continuous boundary functions
\[
G = (g_1,\ldots, g_N),
\]
$g_i \in \C(\Gamma)$, and a vector of eigenvalues
\[
\Lambda = (\lambda_1,\ldots,\lambda_N),
\]
$\lambda_i\in \Cb$, the projection error~(\ref{eq:predError}) boils down to
\begin{equation}\label{eq:projErr_Vg}
\| \ob - \mr{proj}_{\mathcal{V}_{G,\Lambda} } \ob     \|,
\end{equation}
where
\[
\mathcal{V}_{G,\Lambda} = \mr{span}\{\Lc_{\lambda_1}g_1,\ldots,\Lc_{\lambda_N}g_N \}.
\]
Here $\mr{proj}_{\mathcal{V}_{G,\Lambda} }$ denotes the projection operator\footnote{\new{Depending on the norm used in~(\ref{eq:projErr_Vg}), the projection on $\mathcal{V}_{G,\Lambda}$ may not be unique, in which case we assume that a tiebreaker function has been applied, making the projection operator well defined.}} onto the finite-dimensional subspace $\mathcal{V}_{G,\Lambda}$.

The goal  is then to minimize~(\ref{eq:projErr_Vg}) with respect to $G$. In general, this is a non-convex problem. However, we will show that if each component of $\ob$ is considered separately, this problem admits a convex reformulation. Assume therefore that the total budget of~$N$ boundary functions is partitioned as
\[
G = (G_1,\ldots, G_{n_{\ob}} )
\]
with $\sum_{i=1}^{n_{\ob}}N_i = N$, where $N_i = \# G_i$ with $\# G_i$ denoting the cardinality of $G_i$. The eigenvalues are partitioned correspondingly, i.e., 
\[
\Lambda = (\Lambda_1,\ldots, \Lambda_{n_{\ob}} ),\quad \#\Lambda_i = N_i.
\]

Then, given $i \in \{1,\ldots, n_{\ob}\}$ and $\Lambda_i \in \Cb^{N_i}$, we want to solve
\[
\mathop{\mr{minimize}}\limits_{G_i\in \C(\Gamma)^{N_i}}\| \ob_i - \mr{proj}_{\mathcal{V}_{G_i,\Lambda_i} } \ob_i     \|.
\]
This problem is equivalent to
\[
\mathop{\mr{minimize}}\limits_{g_{i,j}\in\C(\Gamma),\;c_{i,j}\in \Cb}  \;  \big\| \ob_i - \sum_{j=1}^{N_i} c_{i,j} \mathcal{L}_{\lambda_{i,j}} g_{i,j}  \big\|.
\]
Since $\Lc_{\lambda}$ is linear in $g$,  we have $c_{i,j} \mathcal{L}_{\lambda_{i,j}} g_{i,j}  =  \mathcal{L}_{\lambda_{i,j}} (c_{i,j}g_{i,j}) $  with $c_{i,j}g_{i,j} \in \C(\Gamma)$ and hence, using the substitution $c_{i,j}g_{i,j} \leftarrow g_{i,j}  $, this problem is equivalent to
\begin{equation}\label{eq:projAbstract_final}
\mathop{\mr{minimize}}\limits_{g_{i,j}\in\C(\Gamma)}  \;  \big\| \ob_i - \sum_{j=1}^{N_i} \mathcal{L}_{\lambda_{i,j}} g_{i,j}  \big\|,
\end{equation}
which is a \emhp{convex} function of $g_{i,j}$ as desired.

\paragraph{Regularization} Beyond minimization of the projection error~(\ref{eq:projErr_Vg}), one may also wish to optimize the regularity of the resulting eigenfunctions $ \phi_{\lambda,g} = \mathcal{L}_{\lambda} g$ as these functions will have to be represented in a computer when working with data. Such regularity can be optimized by including an additional term penalizing, for instance, a norm of the gradient of $\mathcal{L}_\lambda g$, resulting in a Sobolev-type regularization. Adding this term to~(\ref{eq:projAbstract_final}) results in
\begin{equation}\label{eq:projAbstract_final_regul}
\mathop{\mr{minimize}}\limits_{g_{i,j}\in\C(\Gamma)}  \;  \big\| \ob_i - \sum_{j=1}^{N_i} \mathcal{L}_{\lambda_{i,j}} g_{i,j}  \big\| +\alpha\sum_{j=1}^{N_i} \|\D \mathcal{L}_{\lambda_{i,j}}g_{i,j}\|^{'},
\end{equation}
where $\mathcal{D}$ is a differential operator (e.g., the gradient), $\alpha\ge 0$ is a regularization parameter and the norms $\|\cdot\|$ and $\|\cdot\|'$ do not need to be the same.

\subsubsection{Data-driven algorithm for optimal selection of $g$'s}
Now we derive a data-driven algorithm from the abstract developments above. The idea is to optimize directly over the values of the boundary functions $g$, which, when restricted to the data set~(\ref{eq:data_uncont}), are just finitely many complex numbers collected in the matrix $\mathbf{G}$~(\ref{eq:G}). Formally, assume that the norm in (\ref{eq:projErr_Vg}) is given by the $L_2$ norm with respect to the empirical measure on the data set. In that case, problem~(\ref{eq:projAbstract_final}) becomes
\begin{equation}\label{eq:proj_discretized}
\mathop{\mr{minimize}}\limits_{\mathbf{g}_{i,j}\in\Cb^{M_{\mr{t}}}}  \;  \big\| \mathbf{h}_i - \sum_{j=1}^{N_i} \mathbf{L}_{\lambda_{i,j}} \mathbf{g}_{i,j}  \big\|_2,
\end{equation}
where $\mathbf{h}_i$ is the vector collecting the values of $\ob_i$ stacked on top each other trajectory-by-trajectory, the optimization variables $\mathbf{g}_{i,j} $ are vectors containing the values of the boundary functions $g_{i,j}$ on the starting points of the trajectories in the data set and the matrices $\mathbf{L}_{\lambda_{i,j}}$ are  given by
\[
\mathbf{L}_{\lambda_{i,j}} = \mr{bdiag(  \underbrace{\Lambda_{i,j},\ldots, \Lambda_{i,j}}_{M_\mr{t}\,\mr{times}}  )},
\]
where
\[
\Lambda_{i,j} = [ 1, e^{ \lambda_{i,j} T_\mr{s} }, e^{2\lambda_{i,j} T_\mr{s}}, \ldots, e^{ M_{\mr{s}}\lambda_{i,j} T_\mr{s} } ]^\top.
\]
This problem is equivalent to
\begin{equation}\label{eq:proj_discretized_final}
\mathop{\mr{minimize}}\limits_{\mathbf{g}_{i}\in\Cb^{N_{i}M_{\mr{t}}}}  \;  \big\| \mathbf{h}_i -  \mathbf{L}_{\Lambda_i}  \mathbf{g}_{i}  \big\|_2^2,
\end{equation}
where
\begin{equation}\label{eq:L_and_g_struct}
\mathbf{L}_{\Lambda_i} = [\mathbf{L}_{\lambda_{i,1}}, \mathbf{L}_{\lambda_{i,2}},\ldots, \mathbf{L}_{\lambda_{i,N_{i}}}],\quad \mathbf{g}_i = [\mathbf{g}_{i,1}^\top,\mathbf{g}_{i,2}^\top,\ldots,\mathbf{g}_{i,N_{i}}^\top]^\top.
\end{equation}
Problem~(\ref{eq:proj_discretized_final}) is a least-squares problem with optimal solution
\begin{equation}\label{eq:gopt}
\mathbf{g}_i^\star = \mathbf{L}_{\Lambda_i}^\dagger \mathbf{h}_i.
\end{equation}


Define the  $N$-by-$M_{\mr{t}}$ matrix   $\mathbf{G}$ by
\begin{equation}\label{eq:G_data_optim}
\mathbf{G} = \begin{bmatrix}
\mathbf{g}_{1,1}^\star &
\ldots & \mathbf{g}_{1,N_{1}}^\star&
\ldots& 
\mathbf{g}_{n_{\ob},1}^\star& 
\ldots &
\mathbf{g}_{n_{\ob},N_{{\ob}}}^\star
\end{bmatrix}^\top
\end{equation}
collecting the values of all boundary functions on the initial points of the trajectories, i.e., $\mathbf{G}(i,j)$ is the value of boundary function $i$ on the data point $x_0^j$ (here the vector $\mathbf{g}_{i}^\star$ is assumed to be partitioned in the same way as $\mathbf{g}_i$ in~(\ref{eq:L_and_g_struct})). The matrix $\mathbf{G}$ is then used in Algorithm~\ref{alg:eigFunLerning}.

\paragraph{Regularization} With regularization, assuming the $\|\cdot\|'$ norm in~(\ref{eq:projAbstract_final_regul}) is the $L_2$ norm with respect to the empirical measure on the data set, we get
\begin{equation}\label{eq:proj_discretized_final_reg}
\mathop{\mr{minimize}}\limits_{\mathbf{g}_{i}\in\Cb^{N_{i}M_{\mr{t}}}}  \;  \big\| \mathbf{h}_i -  \mathbf{L}_{\Lambda_i}  \mathbf{g}_{i}  \big\|_2^2 + \|   \mathbf{D}\mathbf{L}_{\Lambda_i}\mathbf{g}_i    \|_2^2,
\end{equation}
where $\mathbf{D}$ is a discrete representation of the differential operator $\mathcal{D}$ used in~(\ref{eq:projAbstract_final_regul}). The optimal solution to~(\ref{eq:proj_discretized_final_reg}) is given by
\[
\mathbf{g}_i^\star = \begin{bmatrix}
\mathbf{L}_{\Lambda_i} \\ \mathbf{D}\mathbf{L}_{\Lambda_i}
\end{bmatrix}^\dagger \begin{bmatrix}
\mathbf{h}_i \\ 0
\end{bmatrix}.
\]
The matrix $\mathbf{G}$ is then defined by~(\ref{eq:G_data_optim}) and used in Algorithm~\ref{alg:eigFunLerning}.


}

\new{\subsection{Selection of eigenvalues $\lambda$}\label{sec:optimChoice_lam}
Now we describe how to select the eigenvalues using numerical optimization. For simplicity, we will work in the setting without regularization, the generalization being straightforward. Plugging in~(\ref{eq:gopt}) into~(\ref{eq:proj_discretized_final}) and using the fact that $\mathbf{L}_{\Lambda_i}\mathbf{L}_{\Lambda_i}^\dagger$ is the orthogonal projection operator onto the column space of $\mathbf{L}_{\Lambda_i}$ (and hence is self-adjoint and idempotent), the minimum in~(\ref{eq:proj_discretized_final}) is equal to
\begin{equation}\label{eq:lamObj}
\|\mathbf{h}_i\|_2^2 - \| \mathbf{L}_{\Lambda_i}\mathbf{L}_{\Lambda_i}^\dagger \mathbf{h}_i \|_2^2.
\end{equation}
The optimal choice of $\Lambda_i = (\lambda_{i,1},\ldots,\lambda_{i,N_i})\in \Cb^{N_i}$ minimizes~(\ref{eq:lamObj}). Unfortunately, (\ref{eq:lamObj}) is a non-convex function of $\Lambda_i$ with no obvious convexification available. Fortunately, the value of $N_i$ is typically modest and therefore this optimization problem can be (at least approximately) solved by using local optimization initialized from a collection of randomly chosen initial conditions. Crucial to this is the availability of an analytic expression for the gradient of~(\ref{eq:lamObj}) with respect to $\Lambda_i$; for simplicity of analysis we assume that the matrix $\mathbf{L}_{\Lambda_i}\herm\mathbf{L}_{\Lambda_i}$ is invertible, which is the case generically provided that $M_{\mr{t}}M_{\mr{s}} > M_{\mr{t}} N_i$ (in which case the matrix $\mathbf{L}_{\Lambda_i}$ is tall). Deriving the gradient in the fully general setting is straightforward but lengthy, using the derivative of matrix pseudoinverse~\cite{stewart1977perturbation}. Assuming $\mathbf{L}_{\Lambda_i}\herm\mathbf{L}_{\Lambda_i}$  is invertible, (\ref{eq:lamObj}) becomes
\begin{equation}\label{eq:p_simplified}
p_i(\Lambda_i) := \|\mathbf{h}_i\|^2_2 - \mathbf{h}_i\herm \mathbf{L}_{\Lambda_i}(\mathbf{L}_{\Lambda_i}\herm\mathbf{L}_{\Lambda_i})^{-1}\mathbf{L}_{\Lambda_i}\herm \mathbf{h}_i.
\end{equation}
Using the fact that for a matrix $A(\theta)$, depending on $\theta\in \Rb$, we have \[
\frac{d}{d\theta} [A(\theta)^{-1}] = -A(\theta)^{-1} \frac{dA(\theta)}{d\theta} A(\theta)^{-1},\] 
we obtain
\begin{equation}\label{eq:p_grad}
\frac{\partial{p_i(\Lambda_{i,j}) }}{\partial \theta_{i,j}} = -2\R\left\{\mathbf{h}_i\herm\frac{\partial \mathbf{L}_{\Lambda_i}}{\partial \theta_{i,j}} \mathbf{q}_i\right\} + \mathbf{q}_i\herm \left[  \mathbf{L}_{\Lambda_i}\herm   \frac{\partial \mathbf{L}_{\Lambda_i}}{\partial \theta_{i,j}} + \left(\frac{\partial \mathbf{L}_{\Lambda_i}}{\partial\theta_{i,j}}\right)\herm  \mathbf{L}_{\Lambda_i}      \right]\mathbf{q}_i,
\end{equation}
where $\theta_{i,j} \in \Rb$ stands either for the real or imaginary part of $\lambda_{i,j}$ (the expressions are the same for both) and
\[
\mathbf{q}_i = (\mathbf{L}_{\Lambda_i}\herm\mathbf{L}_{\Lambda_i} )^{-1}\mathbf{L}_{\Lambda_i}\herm \mathbf{h}_i.
\]
Continuing, we have
\[
\frac{\partial \mathbf{L}_{\Lambda_i}}{\partial \theta_{i,j}} = \left[0,\ldots,0, \frac{\mathbf{L}_{i,j}}{\partial \theta_{i,j}}, 0,\ldots,0 \right]
\]
with the same block structure as in~(\ref{eq:L_and_g_struct}) and the non-zero element on the $j^{\mr{th}}$ block position and with
\[
\frac{\mathbf{L}_{i,j}}{\partial \theta_{i,j}} = \mr{bdiag}\left(\frac{\partial \Lambda_{i,j}}{\partial \theta_{i,j}},\ldots,\frac{\partial \Lambda_{i,j}}{\partial \theta_{i,j}}\right),
\]
where
\[
\frac{\partial \Lambda_{i,j}}{\partial \lambda_{i,j}^\Rb} = T_{\mr{s}}\begin{bmatrix}0\\1\\2\\ \vdots \\M
_{\mr{s}}  \end{bmatrix}\circ \Lambda_{i,j},\quad \frac{\partial \Lambda_{i,j}}{\partial \lambda_{i,j}^\Ib} = \sqrt{-1}\frac{\partial \Lambda_{i,j}}{\partial \lambda_{i,j}^\Rb}, 
\]
with $\circ$ denoting the componentwise (Hadamard) product and $\sqrt{-1}$  the imaginary unit and with $\lambda_{i,j}^\Rb$ and $\lambda_{i,j}^\Ib$ denoting the real respectively imaginary parts of $\lambda_{i,j}$.

This allows us to evaluate $\frac{\partial{p}(\Lambda_i)}{\partial \lambda_{i,j}^\Rb}$ and $\frac{\partial{p}(\Lambda_i)}{\partial\lambda_{i,j}^\Ib}$ for all $j$ and hence allows us to evaluate the gradient of $p(\Lambda_i)$ with respect to the real and imaginary parts of the eigenvalues in $\Lambda_i$.

\paragraph{Special structure} We note that without regularization, the least-squares problem~(\ref{eq:proj_discretized_final}) can be decomposed ``trajectory-by-trajectory'' to $M_{\mr{t}}$ independent least-squares problems. Moreover, the matrix in each of these least-squares problems is a Vandermonde matrix for which specialized least-squares solution methods exist (e.g.,~\cite{drmavc2018least}). This special special structure also implies that the gradient of $p(\Lambda_i)$ is a sum of $M_{\mr{t}}$ independently computable terms and hence amenable to parallel computation.
 }

\subsection{Generalized eigenfunctions from data} Algorithm~\ref{alg:eigFunLerning} can be readily extended to the case of generalized eigenfunctions as described in Section~\ref{sec:genEfun}. Step~3 of this algorithm is replaced by
\[
\begin{bmatrix}
\psi_{\lambda,g_1}(x_k^j) \\ \vdots \\ \psi_{\lambda,g_{n_\lambda}}(x_k^j)
\end{bmatrix} = e^{J_\lambda k T_{\mr{s}}} \begin{bmatrix}g_1(x_0^j)\\\vdots\\ g_{n_\lambda}(x_0^j) \end{bmatrix},  
\]
where, as in Section~\ref{sec:genEfun}, $J_\lambda$ is a Jordan block of size $n_\lambda$ associated to an eigenvalue $\lambda$ and $g_1,\ldots,g_{n_\lambda}$ are continuous boundary functions. Step~4 of Algorithm~\ref{alg:eigFunLerning} (interpolation / approximation) is then performed on each $\psi_{\lambda,g_i}$ separately. Note that with Jordan block of size one, the entire procedure reduces to the case of eigenfunctions.

We note that there are no restrictions on the parings of Jordan blocks (of arbitrary size) and continuous boundary functions, thereby providing additional freedom for constructing very rich invariant subspaces of the Koopman operator.

\subsection{Obtaining matrices $A$ and $C$}
Here we describe how to obtain the matrices $A$ and $C$ in~~(\ref{eq:linPred_uncont}).
Let
\[
 \bs{\hat\phi} = \begin{bmatrix}\hat\phi_1\\\vdots\\\hat\phi_N
\end{bmatrix}
\]
denote the vector of $N$ eigenfunction approximations obtained from Algorithm~\ref{alg:eigFunLerning}. \new{The matrix $A$ is then given simply by
\[
A = \mr{diag}(\lambda_1,\ldots,\lambda_N),
\]
where $\lambda_i$ are eigenvalues associated to $\hat\phi_i$. Provided that the boundary functions were chosen optimally as described in Section~\ref{sec:optimChoice_g}, the matrix $C$ is obtained as
\begin{equation}\label{eq:Cbdiag}
C = \mr{bdiag}( \mathbf{1}_{N_1},\ldots,\mathbf{1}_{N_{n_{\ob}}} ),
\end{equation}
where $\mathbf{1}_{N_i}$ is a row vector of ones of length $N_i$ and $N_i$ constitute a partition of $N$ as described in Section~\ref{sec:optimChoice_g}.}

Generally, irrespective of how the boundary functions were chosen, the matrix $C$ can be obtained by (approximately) solving (\ref{opt:proj}) with $\bs \phi$ replaced by $\bs{\hat\phi} $. This problem is typically not solvable analytically in high dimensions since it requires a multivariate integration or uniform bounding of a continuous function (depending on the norm used in~(\ref{opt:proj})). Therefore, we use a sample-based approximation.  If the $L_2$ norm is used in~(\ref{opt:proj}), we solve the optimization problem
\begin{equation}\label{opt:projL2}
\underset{C \in \Cb^{n_\ob\times N}}{\mbox{minimize}} \;\;\sum_{i=1}^{\bar M} \big \|\ob (\bar x_i) - C\bs{\hat\phi}(\bar x_i)\big\|^2_2.
\end{equation}
For the sup-norm, we solve
\begin{equation}\label{opt:projSup}
\underset{C \in \Cb^{n_\ob\times N}}{\mbox{minimize}}\;\; \max_{i\in\{1,\ldots,\bar M\}} \big \|\ob (\bar x_i) - C\bs{\hat\phi}(\bar x_i)\big \|_\infty.
\end{equation}
The samples $\{\bar x_i\}_{i=1}^{\bar M}$ can either coincide with the samples $\{x_k^j\}_{j,k}$ used for learning of the eigenfunctions or they can be generated anew (e.g., to emphasize certain regions of state-space where accurate projection (and hence prediction) is required). See Section~\ref{sec:compAspects} for a discussion of computational aspects of solving these two problems.

\subsection{Exploiting algebraic structure}\label{sec:algStructure}
It follows immediately from~(\ref{eq:efun_def}) that products and powers of eigenfunctions are also eigenfunctions. In particular, given $\phi_1,\ldots,\phi_{N_0}$ eigenfunctions with the associated eigenvalues $\lambda_1,\ldots,\lambda_{N_0}$, the function
\begin{equation}\label{eq:efunAlgStruct}
\phi = \phi_1^{p_1}\cdot \ldots \cdot \phi_{N}^{p_{N_0}}
\end{equation}
is a Koopman eigenfunction with the associated eigenvalue
\[
\lambda = p_1\lambda_1 + \ldots + p_{N_0}\lambda_{N_0}.
\]
This holds for any nonnegative \emph{real or integer} powers $p_1,\ldots,p_{N_0}$.

This algebraic structure can be exploited to generate additional eigenfunction approximations starting from those obtained using Algorithm~\ref{alg:eigFunLerning}, at a very little additional computational cost. In particular, one can construct only a handful of eigenfunction approximations using Algorithm~\ref{alg:eigFunLerning}, e.g., with $\Lambda $ being a single real eigenvalue or a single complex conjugate pair and the set $G$ consisting of linear coordinate functions $x_i$, $i=1,\ldots, n$. This initial set of eigenfunctions can then be used to generate a very large number of additional eigenfunction approximations using~(\ref{eq:efunAlgStruct}) in order to ensure that the projection error~(\ref{opt:proj}) is small. When queried at a previously unseen state (e.g., during feedback control), only the eigenfunction approximations $\hat\phi_1,\ldots,\hat\phi_{N_0}$ have to be computed using interpolation or approximation (which can be costly if the number of basis functions $\bs\beta$ is large in step 5 of Algorithm~\ref{alg:eigFunLerning}) whereas the remaining eigenfunction approximations are obtained by simply taking powers and products according to~(\ref{eq:efunAlgStruct}).

\subsection{Computational aspects}\label{sec:compAspects}
The main computational burden of the proposed method is the solution to the interpolation or approximation problems~(\ref{opt:interp}) and (\ref{opt:regression}). Both these problems are \emph{convex} optimziation problems that can be reliably solved using generic packages such as MOSEK or Gurobi. For very large problem instances, specialized packages for $\ell_1$ / $\ell_2$ regularized least-squares problems may need to be deployed (see, e.g. \cite{yang2013fast,parikh2014proximal}). We note that for each pair $(\lambda,g)$ the coefficients $\bs\beta(x_k^j)$ remain the same, which can be exploited to drastically speed up the solution.

For problems without $\ell_1$ regularization, we have an explicit solution
\[
c_{i} = \bf W^\dagger \bf w
\]
 for~(\ref{opt:interp}) and
\[
c_{i} = ({\bf W} + \delta_2 {\bf I})^\dagger \bf w,
\]
for~(\ref{opt:regression})
where
\[
\bf W = \begin{bmatrix}
 \bs \beta (x_0^1) & \ldots & \bs\beta (x_{M_{\mr{s}}}^1) & \bs\beta (x_0^2) & \ldots & \bs \beta (x_{M_{\mr{s}}}^2) & \ldots & \bs\beta (x_0^{M_{\mr{t}}}) & \ldots & \bs \beta (x_{M_{\mr{s}}}^{M_{\mr{t}}})
\end{bmatrix}^\top
\]
and
\[
\bf w = \begin{bmatrix}
 \phi_{i} (x_0^1) & \ldots & \phi_{i}  (x_{M_{\mr{s}}}^1) & \phi_{i}  (x_0^2) & \ldots & \phi_{i}  (x_{M_{\mr{s}}}^2) & \ldots & \phi_{i}  (x_0^{M_{\mr{t}}}) & \ldots & \phi_{i}  (x_{M_{\mr{s}}}^{M_{\mr{t}}})
\end{bmatrix}^\top,
\]
where  $\phi_{i} = \phi_{\lambda_i,g_i} $ as defined in~(\ref{eq:eigFun_computed}).

The projection problems~(\ref{opt:projL2}) and (\ref{opt:projSup}) are both convex optimization problems that can be easily solved using generic convex optimization packages (e.g., MOSEK or Gurobi). The use of such tools is necessary for the sup-norm projection problem~(\ref{opt:projSup}). However, for the least-squares projection problem~(\ref{opt:projL2}), linear algebra is enough with the analytical solution being
\[
C = [\ob(\bar x_1),\ldots, \ob(\bar x_{\bar M})] [\bs{\hat\phi}(\bar x_1),\ldots, \bs{\hat\phi}(\bar x_{\bar M})] ^\dagger.
\]

\section{Linear predictors for controlled systems}\label{sec:cont}
In this section we describe how to build linear predictors for controlled systems. Assume a nonlinear controlled system of the form
\new{
\begin{equation}\label{eq:sys_cont}
	\new{\dot{x} =  f(x) + Hu}
\end{equation}
with the state $x\in X\subset \Rb^n$ and control input $u\in U\subset \Rb^m$ and $H\in \Rb^{n\times m}$. This form is general since any control system of the form $\dot{\tilde x} = \tilde{f}(\tilde x,v)$ can be transformed to~(\ref{eq:sys_cont}) using the state inflation\footnote{\new{Imposing constraints on the control input of the state-inflated system corresponds to imposing constraints on the derivative of the original control input, which is important in practical applications.}} $x = [\tilde x^\top,v^\top]^\top$, $\dot v = u$, which leads to

\[
f := \begin{bmatrix}
\tilde{f}(\tilde{x},v) \\ 0 \end{bmatrix} ,\quad H := \begin{bmatrix} 0 \\ \mathbf{I} \end{bmatrix}.
\]

}

 As in~\cite{korda2018linear}, the goal is to construct a predictor in the form of a controlled \emph{linear} dynamical system
\begin{subequations}\label{eq:linPred_cont}
\begin{align}
\dot{z} &=  Az + Bu \\
z_0 &= \bs {\hat \phi}(x_0), \\
\hat{y} &=  Cz.
\end{align}
\end{subequations}

Whereas~\cite{korda2018linear} uses a one-step procedure (essentially a generalization of the extended dynamic mode decomposition (EDMD) to controlled systems), here we follow a two-step procedure, where we first construct eigenfunctions for the uncontrolled system
\[
\dot{x} = f(x).
\]
We assume that we have two data sets available. The first one is an uncontrolled dataset $\D$ with the same structure as in~(\ref{eq:data_uncont}) in Section~\ref{sec:learning}. The second data set, $\D_{\mr{c}}$, is with control in the form of $M_{\mr{t},\mr{c}}$ equidistantly sampled trajectories with $M_{\mr{s},\mr{c}}+1$ samples each, i.e.,
\begin{equation}\label{eq:data_uncont}
\D_{\mr{c}} = \Big( (x_k^j)_{k=0}^{M_{\mr{s},\mr{c}}}, (u_k^j)_{k=0}^{M_{\mr{s},\mr{c}}-1}  \Big)_{j=1}^{M_{\mr{t},\mr{c}}},
\end{equation}
where $x_{k+1}^j = S_{T_{\mr{s}}}(x_k^j,u_k^j)$, where $S_t(x,u)$ denotes the solution to~(\ref{eq:sys_cont}) at time $t$ starting from $x$ and with the control input held constant and equal to $u$ in $[0,t]$. We note that both the number of trajectories and the trajectory length may differ for the controlled and uncontrolled data sets.

\paragraph{Step 1 -- $\bs {\hat \phi}$, $A$, $C$} In the first step of the procedure we construct approximate eigenfunctions $\bs {\hat \phi}$ of~(\ref{eq:sys}) (with $f(x) = f_{\mr{c}}(x,0)$) using the procedure described in Section~\ref{sec:learning} , obtaining also the matrices $A$ and $C$.

\paragraph{Step 2 -- matrix $B$} In order to obtain the matrix $B$ we perform a regression on the controlled data set~(\ref{eq:data_uncont}). The quantity to be minimized is a \emph{multi-step} prediction error. Crucially, this multi-step error can be minimized in a \emph{convex} fashion;  this is due to the fact that the matrices $A$ and $C$ are already known and fixed at this step and the predicted output $\hat y$ of~(\ref{eq:linPred_cont}) depends affinely on $B$. This is in stark contrast to EDMD-type methods, where only one-step ahead prediction error can be minimized in a convex fashion. In order to keep expressions simple we assume that the time interval over which we want to minimize the prediction error coincides with the length of the trajectories in our data set (everything generalizes straightforwardly to shorter prediction times). The problem to be solved therefore is
\begin{equation}\label{eq:multiStepError}
\underset{B_{\mr{d}}\in \Rb^{N\times m}}{\mbox{minimize}}\;\;\sum_{j=1}^{M_{\mr{t},\mr{c}}} \sum_{k=1}^{M_{\mr{s},\mr{c}}} \|\ob(x_k^j) - \hat y_k(x_0^j)   \|_2^2,
\end{equation}
where
\[
\hat y_k(x_0^j) = CA_{\mr{d}}^k z_{0}^j + \sum_{i = 0}^{k-1}  CA_{\mr{d}}^{k-i-1} B_{\mr{d}} u_{i}^j
\]
is the output $\hat y$ of~(\ref{eq:sys_cont}) at time $kT_{\mr{s}}$ starting from the (known) initial condition \[
z_0^j = \bs {\hat \phi}(x_0^j).
\]
The discretized matrices $A_{\mr{d}}$ (known) and $B_{\mr{d}}$ (to be determined) are related to $A$ and $B$  by
\begin{equation}\label{eq:AdBd}
A_{\mr{d}} = e^{ A T_{\mr{s}} } \;,\quad B_{\mr{d}} = \left(\int_0^{T_{\mr{s}}}e^{-As}\,ds\right) B.
\end{equation}
We note that in the above expression the matrix multiplying $B$ is invertible for any $T_{\mr{s}} > 0$ and therefore $B$ can be uniquely recovered from the knowledge of $B_{\mr{d}}$. Using vectorization, the output $\hat y_k(x_0^j) $ can be re-written as
\begin{equation}\label{eq:outputPred_cont}
\hat y_k(x_0^j) = CA_{\mr{d}}^k z_{0}^j + \sum_{i = 0}^{k-1}  \big[(u_{i}^j)^{\top} \otimes (CA_{\mr{d}}^{k-i-1})\big] \mr{vec}(B_{\mr{d}} ),
\end{equation}
where $\mr{vec}(\cdot)$ denotes the (column-major) vectorization of a matrix and $\otimes$ the Kronecker product. 
Since $A_{\mr{d}}$, $C$, $z_{0}^j$ and $\ob(x_k^j)$ are all known, plugging in~(\ref{eq:outputPred_cont}) to the least-squares problem~(\ref{eq:multiStepError}) leads to the minimization problem
\begin{equation}\label{opt:minMultistep}
\underset{b \, \in\, \Rb^{m N}}{\mbox{minimize}} \;\; \|  \Theta b - \theta \|_2^2,
\end{equation}
where
\[
\Theta = \begin{bmatrix}\Theta_1^\top & \Theta_2^\top & \ldots & \Theta_{M_{\mr{t}}}^\top \end{bmatrix}^\top , \qquad \theta = \begin{bmatrix}\theta_1^\top & \theta_2^\top & \ldots & \theta_{M_{\mr{t}}}^\top \end{bmatrix}^\top
\]
with
\[
\Theta_{j} = \begin{bmatrix}
(u_0^j)^\top \otimes C  \\ (u_1^j)^\top \otimes C + u_0^j\otimes(CA_{\mr{d}}) \\ \vdots \\
\sum_{i = 0}^{M_{\mr{s}}-1}  \big[(u_{i}^j)^{\top} \otimes (CA_{\mr{d}}^{M_{\mr{s}}-i-1})\big]
\end{bmatrix},\qquad \theta_{j} = \begin{bmatrix} \ob(x_1^j) - CA_{\mr{d}}z_0^j \\ \ob(x_2^j) - CA_{\mr{d}}^2z_0^j \\ \vdots \\ \ob(x_{M_{\mr{s}}}^j) - CA_{\mr{d}}^{M_{\mr{s}}}z_0^j 
\end{bmatrix}.
\]

The matrix $B_{\mr{d}}$ is then given by
\begin{equation}\label{eq:Bd}
B_{\mr{d}} = \mr{vec}^{-1}(\Theta^\dagger\theta),
\end{equation}
where $\Theta^\dagger\theta$ is an optimal solution to~(\ref{opt:minMultistep}). Since $A = \mr{diag}(\lambda_1,\ldots,\lambda_N)$, the matrix $B$ is obtained as
\begin{equation}\label{eq:B}
B = \left(\int_0^{T_{\mr{s}}}e^{-As}\,ds\right)^{\!\!-1} \!\!\!\!B_{\mr{d}}  = \mr{diag}\Big(\frac{\lambda_1}{1-e^{-\lambda_1T_\mr{s}}}\, ,\ldots\,,  \frac{\lambda_N}{1-e^{-\lambda_NT_\mr{s}}}\Big)B_{\mr{d}}.
\end{equation}

\subsection{Koopman model predictive control}\label{sec:KoopmanMPC}
In this section we briefly describe how the linear predictor~(\ref{eq:linPred_cont}) can be used within a \emph{linear} model predictive control (MPC) scheme to control \emph{nonlinear} dynamical systems. This method was originally developed in~\cite{korda2018linear} and this section closely follows this work; the reader is referred therein for additional details as well as to~\cite{korda2018power,arbabi2018data} for applications in power grid and fluid flow control.

An MPC controller solves at each step of a closed-loop operation an optimization problem where a given cost function is minimized over a finite prediction horizon with respect to the predicted control inputs and predicted outputs of the dynamical system. For nonlinear systems, this is almost always a nonconvex optimization problem due to the equality constraint in the form of the nonlinear dynamics. In the Koopman MPC framework, on the other hand, we solve the \emph{convex quadratic} optimization problem (QP)


\begin{equation}\label{eq:MPC}
\begin{array}{ll}
\underset{u_i, z_i, \hat y_i }{\mbox{minimize}} & z_{N_{\mr{p}}}^\top Q_{N_{\mr{p}}} z_{N_{\mr{p}}} + q_{N_{\mr{p}}}^\top z_{N_{\mr{p}}} + \sum_{i=0}^{N_{\mr{p}}-1} z_i^\top Q_i z_i + u_i^\top R_i u_i + q_i^\top z_i + r_i^\top u_i   \\
\mbox{subject to} & z_{i+1} = A_{\mr{d}}z_i + B_{\mr{d}}u_i, \quad  \hspace{3mm} i = 0,\ldots, N_{\mr{p}}-1 \\
& E_i \hat y_i + F_iu_i \le b_i,  
\hspace{4.5mm}\quad \quad i = 0,\ldots, N_{\mr{p}}-1\\
&E_{N_{\mr{p}}}\hat y_{N_{\mr{p}}} \le b_{N_{\mr{p}}}\\
& \hat y_i = C z_i\\
\mbox{parameter} & z_0 = {\bs {\hat \phi}}(x_{\mr{current}}) ,
\end{array}
\end{equation}
where the cost matrices $Q_i \in \Rb^{n_\ob\times n_\ob}$ and $R_i \in \Rb^{m\times m}$  are positive semidefinite and $N_{\mr{p}}$ is the prediction horizon. The optimization problem is parametrized by $x_{\mr{current}} \in \Rb^n$ which is the current state measured during the closed-loop operation. The control input applied to the system is the first element of the control sequence optimal in~(\ref{eq:MPC}). Notice that in~(\ref{eq:MPC}) we use directly the discretized predictor matrices $A_{\mr{d}}$ and $B_{\mr{d}}$, where $A_{\mr{d}} = \mr{diag}(e^{\lambda_1 T_{\mr{s}}},\ldots,e^{\lambda_N T_{\mr{s}}})$ and $B_{\mr{d}}$ is given by~(\ref{eq:Bd}) with $T_{\mr{s}}$ being the sampling interval. See Algorithm~\ref{alg:koopmanMPC} for a summary of the Koopman MPC in this sampled data setting.

\paragraph{Handling nonlinearities} Crucially, all nonlinearities in $x$ are subsumed in the output mapping $\ob$ and therefore predicted in a linear fashion through~(\ref{eq:linPred_cont}) (or its discretized equivalent). For example, assume we wish to minimize the predicted cost
\begin{equation}\label{eq:costNL}
J_{{\mr{nonlin}}} = J_{\mr{quad}} + l_{N_{\mr{p}}}(x_{N_{\mr{p}}}) + \sum_{i=0}^{N_{\mr{p}}-1} l(x_i),
\end{equation}
subject to the stage and terminal constraints
\begin{subequations}\label{eq:constNL}
\begin{align}
c(x_i) + Du &\le 0,  \qquad i = 0,\ldots, N_{\mr{p}} - 1, \\
c_{N_{\mr{p}}}(x_i) & \le 0, \qquad  i = 0,\ldots, N_{\mr{p}} - 1,
\end{align}
\end{subequations}
where
\[
J_{\mr{quad}} = x_{N_{\mr{p}}}^\top Q_{N_{\mr{p}}} x_{N_{\mr{p}}} + q_{N_{\mr{p}}}^\top x_{\mr{N}_p}  + \sum_{i=0}^{N_{\mr{p}}-1} x_i^\top Q x_i + u_i^\top R u_i +q^\top x_i +  r^\top u_i
\]
is convex quadratic and $c:\Rb^n\to \Rb^{n_c}$, $c_{N_{\mr{p}}}:\Rb^n\to \Rb^{n_{c_\mr{p}}}$, $l:\Rb^n
\to\Rb$ and $l_{N_{\mr{p}}}:\Rb^n\to\Rb$ are nonlinear functions. The mapping $\ob$ is then set to
\[
\ob (x) = \begin{bmatrix}
x \\ l(x) \\ l_{N_{\mr{p}}}(x) \\ c(x) \\ c_{N_{\mr{p}}}(x)
\end{bmatrix}.
\]
Replacing $\ob$ by $\hat y$ in~(\ref{eq:costNL}), the objective function $J_{{\mr{nonlin}}}$ translates to a \emph{convex} quadratic in $(u_0,\ldots,u_{N_{\mr{p}}-1})$ and $(\hat y_0,\ldots,
\hat y_{N_{\mr{p}}})$; similarly the stage and terminal constraints~(\ref{eq:constNL}) translate to \emph{affine} (and hence convex) inequality constraints on $(u_0,\ldots,u_{N_{\mr{p}}-1})$ and $(\hat y_0,\ldots,
\hat y_{N_{\mr{p}}})$.

Note that polytopic constraints on control inputs can be encoded by selecting certain components of the vector function $c(x_i)$ equal to constant functions. For example, box constraints on $u$ of the form $u\in [u_{\mr{min}},u_{\mr{max}}]$ with $u_{\mr{min}} \in \Rb^m$ and $u_{\mr{max}} \in \Rb^m$ are encoded by selecting
\[
c(x) = \begin{bmatrix}   -u_{\mr{max}} \\ u_{\mr{min}} \\ \tilde{c}(x) \end{bmatrix},\quad D = \begin{bmatrix}  {\bf I} \\ -{\bf I} \\ \tilde D \end{bmatrix},
\]
where $\tilde{c}(\cdot)$ and $\tilde D$ model additional state-input constraints.

\paragraph{No free lunch} At this stage it should be emphasized that since $\hat y$ is only an approximation of the true output~$\ob$, the convex QP~(\ref{eq:MPC}) is only an approximation of the nonlinear MPC problem with~(\ref{eq:costNL}) as objective, and (\ref{eq:sys_cont}) and (\ref{eq:constNL}) as constraints. This is unavoidable at this level of generality since such nonlinear MPC problems are typically NP-hard whereas the convex QP~(\ref{eq:MPC}) is polynomial time solvable. Nevertheless, as long as the prediction $\hat y$ is accurate, we also expect the solution of the linear MPC problem (\ref{eq:MPC}) to be close to the optimal solution of the nonlinear MPC problem, thereby resulting in near-optimal closed-loop performance.

\subsubsection{Dense form Koopman MPC}\label{sec:denseForm}
Importantly for real-world deployment, in problem~(\ref{eq:MPC}), the possibly high-dimensional variables $z_i$ and $\hat y_i$ can be solved for in terms of the variable
\[
{\bf u} := [u_0,\ldots,u_{N_{\mr{p}}-1}]^\top,
\]
obtaining
\begin{equation}\label{eq:MPC_dense}
\begin{array}{ll}
\underset{{\bf u} \in \Rb^{mN_p}}{\mbox{minimize}} & {\bf u}^\top H_1 {\bf u}^\top  + h^\top {\bf u} + z_0^\top H_2 {\bf u}  \\
\mbox{subject to} &  L {\bf u} + M z_0 \le d \\
\mbox{parameter} &  z_0 = {\bs {\hat \phi}}(x_{\mr{current}})
\end{array}
\end{equation}
for some matrices $H_1$, $H_2$, $L$, $M$ and vectors $h$, $d$ (explicit expressions in terms of the data of~(\ref{eq:MPC}) are in the Appendix). Notice that once the product $z_0^\top H_2$ is evaluated, the cost of solving the optimization problem~(\ref{eq:MPC_dense}) is \emph{independent} of the number of eigenfunctions $N$ used. This is essential for practical applications since $N$ can be large in order to ensure a small prediction error~(\ref{eq:predError}).  The optimization problem~(\ref{eq:MPC_dense}) is a convex QP that can be solved by any of the generic  packages for convex optimization (e.g., MOSEK or Gurobi) but also using highly tailored tools exploiting the specifics of the MPC formulation. In this work, we relied on the qpOASES package~\cite{ferreau2014qpoases} that uses a homotopy-based active set method which is particularly suitable for dense-form MPC problems and effectively utilizes warm starting to reduce the closed-loop computation time.

The closed-loop operation of the Koopman MPC is summarized in Algorithm~\ref{alg:koopmanMPC}. Here we assume sampled-data operation, where the control input is computed every $T_{\mr{s}}$ seconds and held constant between the sampling times. We note, however, that the mapping
\[
x_{\mr{current}} \mapsto {\bf u}_0^\star,
\]
where ${\bf u}_0^\star$ is the first component of the optimal solution ${\bf u}^\star = [{\bf u}_0^\star,\ldots,{\bf u}_{N_{\mr{p}}-1}^\star]^\top$ to the problem~(\ref{eq:MPC_dense}), defines a feedback controller that can be evaluated at an arbitrary state $x \in \Rb^n$ at an arbitrary time.

\begin{algorithm}
\caption{Koopman MPC -- closed-loop operation}\label{alg:koopmanMPC}
\begin{algorithmic}[1]
\For{$k=0,1,\ldots$}
\State Set $x_{\mr{current}} = x(kT_{\mr{s}})$ (current state of~(\ref{eq:sys_cont}))
\State Compute $z_0 = \bs {\bs {\hat \phi}}(x_{\mr{current}} )$
\State Solve~(\ref{eq:MPC_dense}) to get an optimal solution ${ \bf u }^\star = [{\bf u}_0^\star,\ldots,{\bf u}_{N_{\mr{p}}-1}^\star]^\top$
\State Apply ${\bf u}_0^\star$ to the system~(\ref{eq:sys_cont}) for $t\in \big[kT_{\mr{s}}\,,\,(k+1)T_{\mr{s}}\big)$
\EndFor
\end{algorithmic}
\end{algorithm}

\section{Numerical examples}

In the numerical examples we investigate the performance of the predictors on the Van der Pol oscillator and the damped Duffing oscillator. The two dynamical systems exhibit a very different behavior: The former has a stable limit cycle whereas the latter two stable equilibria and an unstable equilibrium. However, interestingly but in line with the theory, we observe a very good performance of the predictors constructed for both systems, away from the limit cycle and singularities. On the Duffing system, we also investigate feedback control using the Koopman MPC, managing both transition between the two stable equilibria as well stabilization of the unstable one, in a purely data-driven and convex-optimization-based fashion. Matlab code for the numerical examples is available from

\begin{center}
\url{https://homepages.laas.fr/mkorda/Eigfuns.zip}
\end{center}

\subsection{Van der Pol oscillator}
\begin{figure*}[th]
\begin{picture}(140,155)

\put(30,3){\includegraphics[width=70mm]{./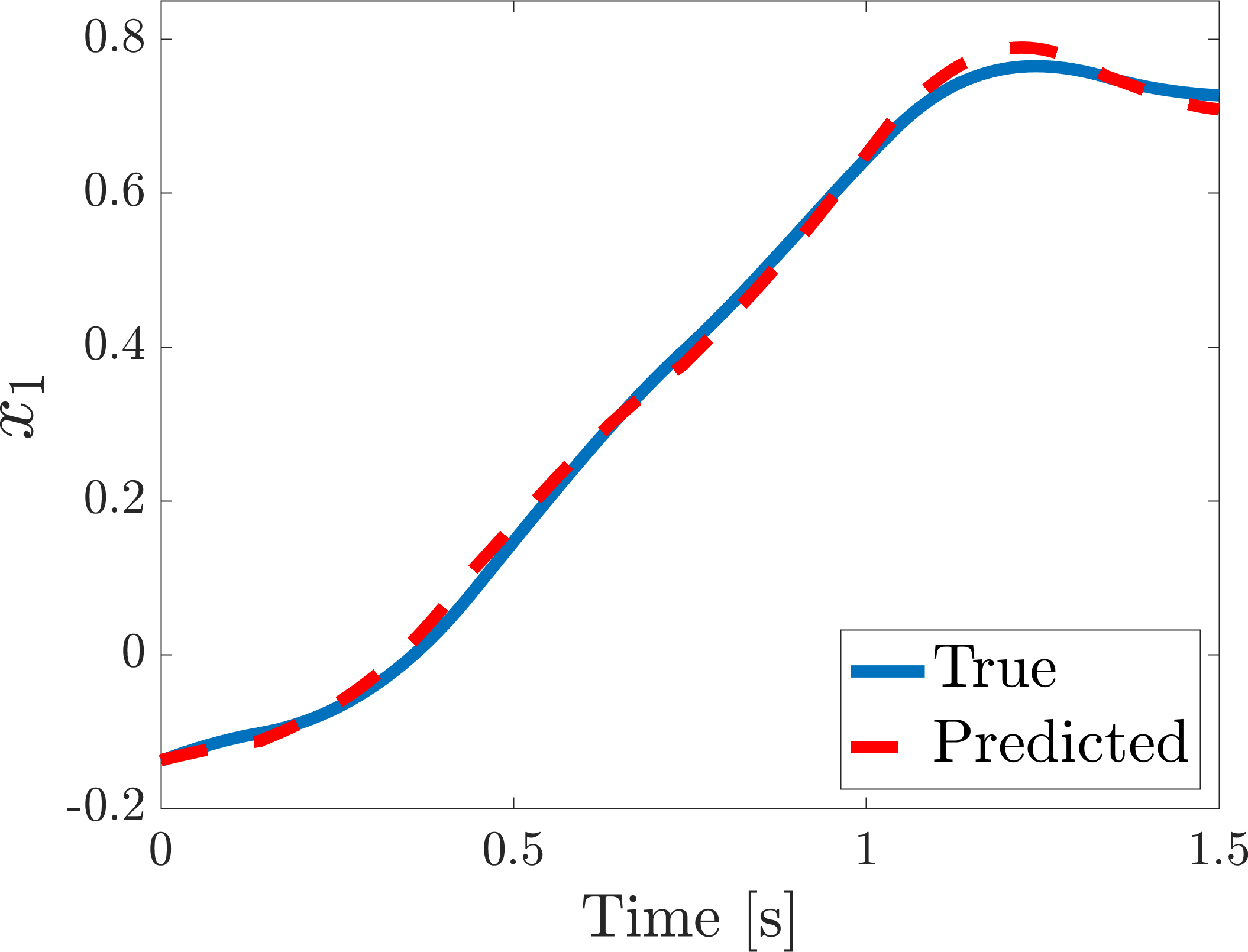}}
\put(250,3){\includegraphics[width=70mm]{./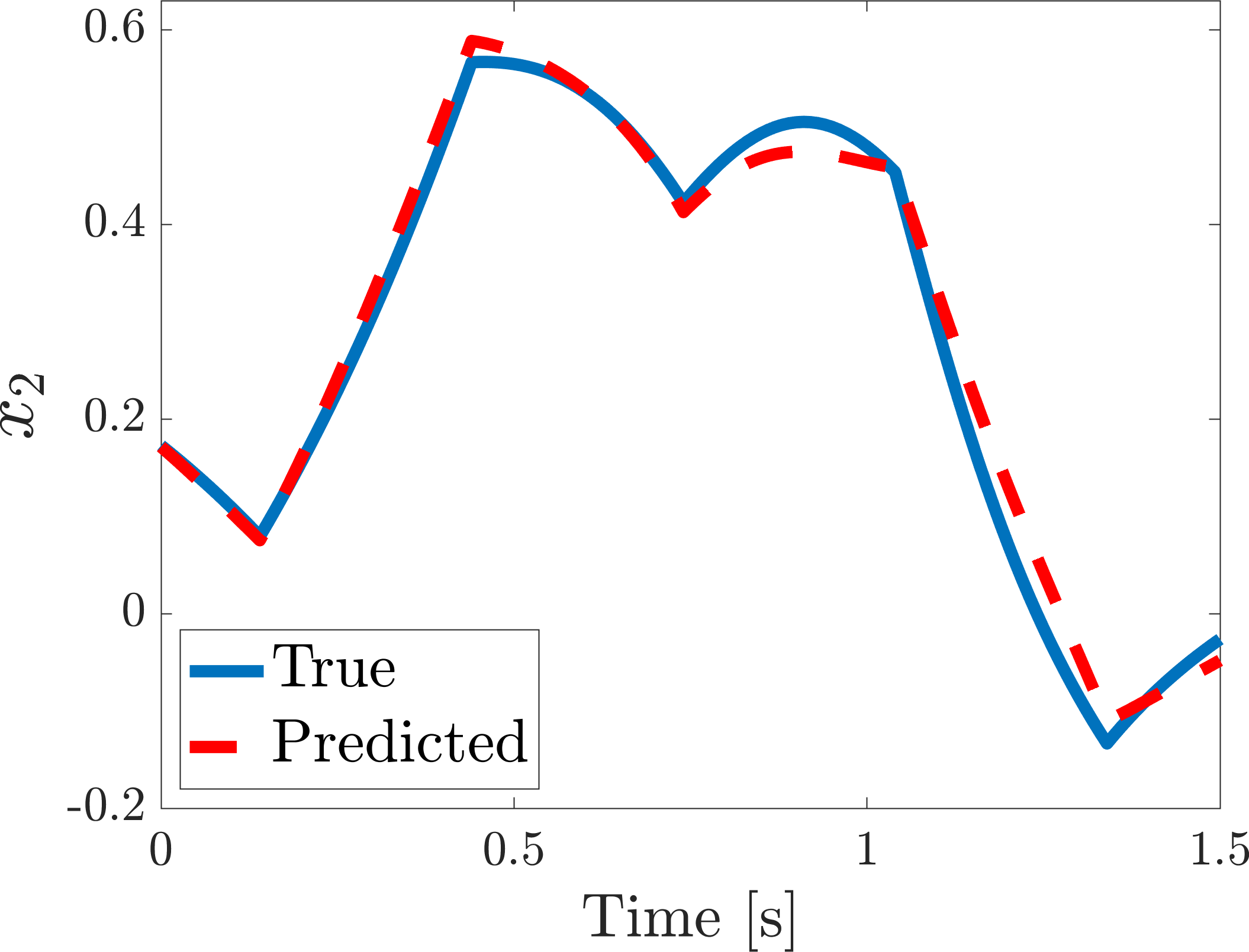}}

\end{picture}
\caption{\footnotesize \new{Van der Pol oscillator -- Prediction with 20 eigenfunctions with optimized selection of eigenvalues and boundnary functions, a randomly chosen initial condition and square wave forcing.}}
\label{fig:vp_pred}
\end{figure*}

\begin{figure*}[th]
\begin{picture}(140,185)

\put(0,3){\includegraphics[width=75mm]{./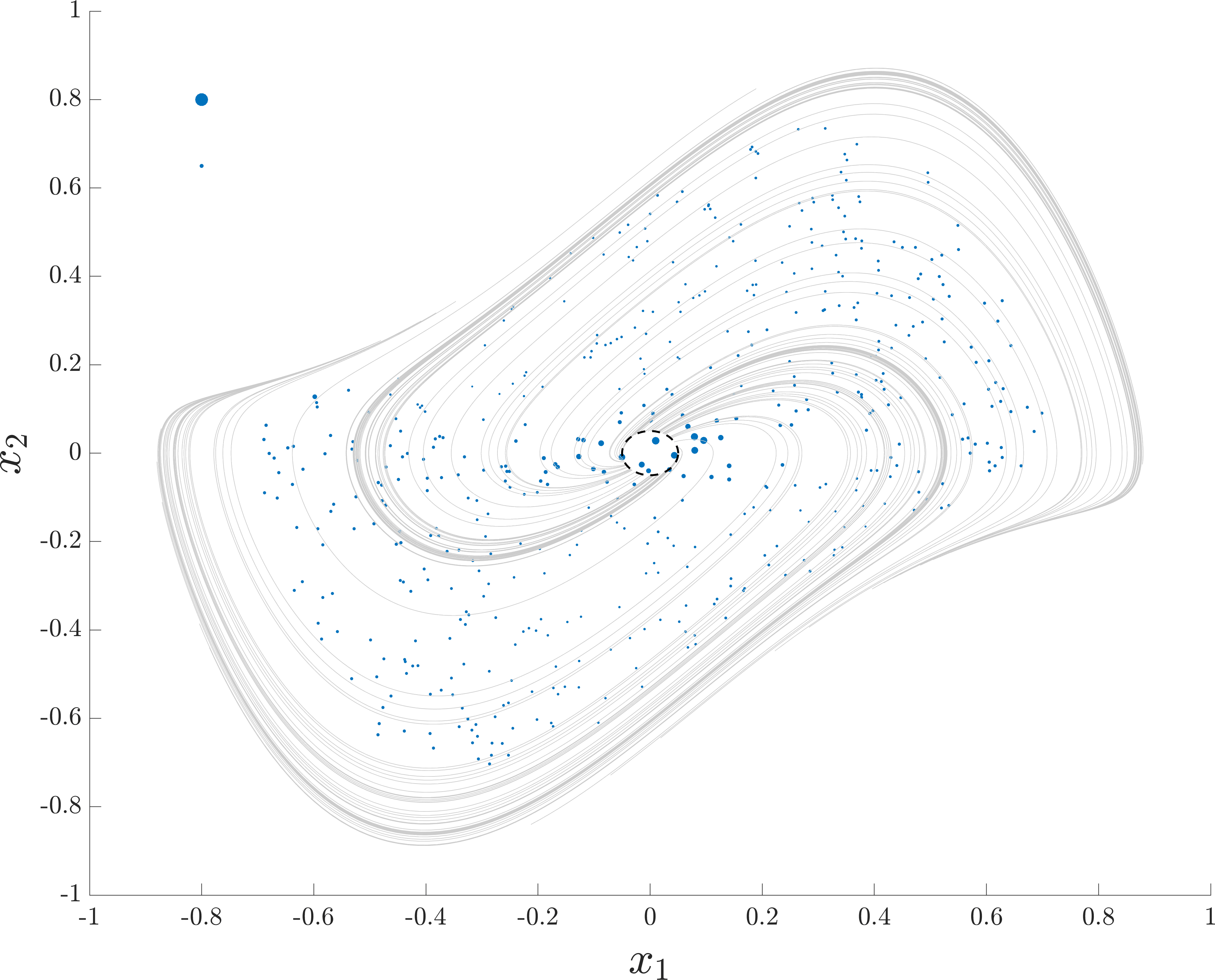}}
\put(240,3){\includegraphics[width=75mm]{./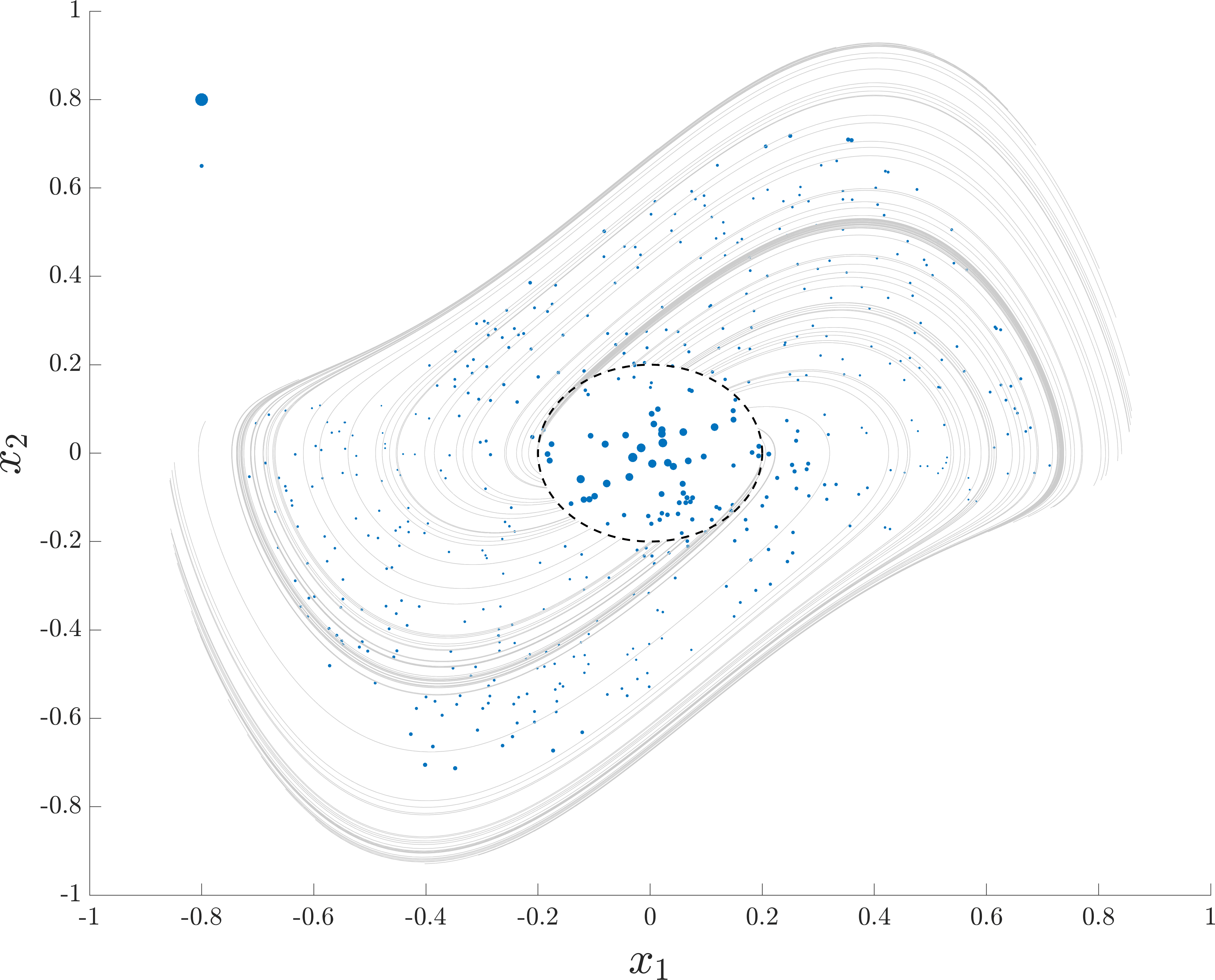}}

\put(47,155){\scriptsize $ 100 \, \%$}
\put(47,144){\scriptsize $ 10 \, \%$}
\put(287,155){\scriptsize $ 100 \, \%$}
\put(287,144){\scriptsize $ 10 \, \%$}


\put(125,38){\scriptsize mean error: \;\, $ 6.0 \, \%$}
\put(125,28){\scriptsize standard dev: $ 3.5 \, \%$}
\put(365,40){\scriptsize mean error: \;\;\;\,$ 7.1 \, \%$}
\put(365,30){\scriptsize standard dev: \hspace{-1mm} $ 6.8 \, \%$}

\end{picture}
\caption{\footnotesize \new{Van der Pol oscillator -- Spatial distribution of the prediction error (controlled) with 20 eigenfunctions with optimized selection of eigenvalues and boundary functions. The trajectories used for construction of the eigenfunctions are depicted in grey.  Their initial conditions were sampled from a circle of radius 0.05 (left pane) and 0.2 (right pane), both depicted in dashed black; neither circle is a non-recurrent surface for the dynamics (which is \emph{not} required by the method). The error for each of the 500 initial conditions from the independently generated test set is encoded by the size of the blue marker.}}
\label{fig:vpSpatialError}
\end{figure*}

In the first example, we consider the classical Van der Pol oscillator with forcing
\begin{align*}
    \dot{x}_1 &= 2x_2 \\
    \dot{x}_2 &= -0.8x_1 + 2x_2 - 10x_1^2x_2 + u.
\end{align*}

We investigate the performance of the proposed predictors, both in controlled and uncontrolled (i.e., $u = 0$) settings. \new{The function $\ob$ to predict  is the state itself, i.e. $\ob(x) = x$, and hence the output $\hat y$ of (\ref{eq:linPred_uncont}) and (\ref{eq:linPred_cont}) predicts the state of the system.  We investigate the performance of the proposed predictor as a function of the number of eigenfunctions $N$ used.   First, we construct the eigenfunction approximations as described in Section~\ref{sec:learning}. The budget of $N$ eigenfunctions is split equally among the components of $\ob$, i.e., $N_1=N_2 = N/2$.} We generate a set of $M_{\mr{t}} = 100$ five second long trajectories sampled with a sampling period $T_{\mr{s}} = 0.01\,\mr{s}$ (i.e.,  $M_{\mr{s}} = 500$). The initial conditions of the trajectories are sampled uniformly over a circle of radius 0.05. As a first and natural choice of eigenvalues $\Lambda$, we utilize $\Lambda_{\mr{lat}} = \lattice_{d_\mr{lat}}( \frac{1}{T_{\mr{s}}} \log \Lambda_\mr{DMD})$, where $\Lambda_\mr{DMD}$ are the two eigenvalues obtained by applying the dynamic mode decomposition algorithm to the data set and
\begin{equation}\label{eq:meshd}
\lattice\nolimits_d(\Lambda) =  \Big\{ \sum_{k=1}^{p} \alpha_k \lambda_k \mid \lambda_k \in \Lambda,\; \alpha_k\in \Nb_0, \; p\in \Nb, \; \sum_{k=1}^p \alpha_k \le d \Big\}.
\end{equation}
\new{The number of eigenvalues obtained in this way is $N_{\mr{lat}}= \binom{2 + d_\mr{lat}}{d_\mr{lat}}$. For each value of $N$, we choose $d_{\mr{lat}}$ such that $N_{\mr{lat}} \ge N_1=N_2=N/2$ and use the first $N/2$ eigenvalues of $\Lambda_{\mr{lat}}$ in the algorithm of Section~\ref{sec:optimChoice_lam} for optimal choice of the boundary functions for each component of $\ob$; we do not use regularization, i.e., the matrix $\mathbf{G}$ is obtained using (\ref{eq:gopt}) and (\ref{eq:G_data_optim}). Second, we investigate the benefit of optimizing the eigenvalues as described in Section~\ref{sec:optimChoice_lam}; the objective function~(\ref{eq:p_simplified}) is minimized using local Newton-type algorithm implemented in Matlab's fmincon, with analytic gradients computed using~(\ref{eq:p_grad}) and initial condition given by $\Lambda_{\mr{lat}}$.
}

The $N$ eigenfunctions are computed on the data set using~(\ref{eq:eigFun_computed}) and linear interpolation is used to define them on the entire state space. The $C$ matrix is computed using~(\ref{opt:projL2}) with $\bar x_i$ being the data used to construct the eigenfunctions plus a random noise uniformly distributed over $[-0.05,0.05]^2$. This fully defines the linear predictor~(\ref{eq:linPred_uncont}) in the uncontrolled setting. To get the $B$ matrix in the controlled setting we generate a second data set with forcing. The initial conditions are the same as in the uncontrolled setting; the forcing is piecewise constant signal taking a random uniformly distributed value in $[-1,1]$  in each sampling interval; the length of each trajectory is two seconds. The matrix $B$ and its discrete counterpart $B_{\mr{d}}$ are then obtained using~(\ref{eq:Bd}) and (\ref{eq:B}). \new{In the controlled setting, we investigate the prediction performance for two control signals distinct from the signal used during identification. The first one is a square wave with unit amplitude and period $300\,\mr{ms}$ and the second one a sinusoid wave with unit amplitude and period $60\,\mr{ms}$. Figure~\ref{fig:vp_pred} shows the true and predicted trajectories for the randomly chosen initial condition  $x_0 = [-0.1382,0.1728]^\top$. Table~\ref{tab:rmse_vp_N} reports the prediction error over one second time interval as a function of $N$. }  The error is reported as the root mean square error 
\begin{equation}\label{eq:rmse}
\mr{Prediction\; error} = 100\cdot \frac{\displaystyle\sqrt{\sum_{k}\|x_{\mr{pred}}(kT_s) - x_{\mr{true}}(k T_s)\|_2^2}}{\displaystyle\sqrt{ \sum_{k}\|x_{\mr{true}}(k T_s)\|_2^2}}.
\end{equation}
averaged over 500 randomly chosen initial conditions in the interior of the limit cycle. \new{We observe that optimization of the eigenvalues brings about a significant improvement in performance, especially with a small number of eigenfunctions. We also observe that without control, the average prediciton error is close to $1\,\%$ with only 20 eigenfunctions.}

\begin{table}[t]
\centering
\caption{\footnotesize \rm \new{Van der Pol oscilator -- Prediction error averaged over 500 randomly chosen initial conditions as a function of the total number of eigenfunctions $N$, with and without optimization of the eigenvalues $\lambda$.} }\label{tab:rmse_vp_N}\vspace{2mm}
{\small
\begin{tabular}{lccccc}
\toprule
\new{$\#$ of eigenfunctions $N$}                         &           \new{4}       &       \new{8}             &       \new{12}                    &           \new{16}    & 		 \new{20}                            \\\midrule
\new{$\lambda$ \textbf{not optimized}} & & & & & \\
  \quad \new{Prediction error [uncontrolled]}  &   \new{$ 100.4\,\%$}   &  \new{$95.6 \,\%$}    &          \new{ $51.34\,\%$}       &       \new{$ 13.31 \,\%$}     &  \new{ $5.44\,\%$}     \\
    \quad \new{Prediction error  [square wave control]}  &   \new{$102.2\,\%$}   &  \new{$115.7 \,\%$}    &           \new{$51.2\,\%$}       &       \new{$ 14.5  \,\%$}     &   \new{$8.3\,\%$}    \\
        \quad \new{Prediction error  [sinus wave control]}  &   \new{$ 101.3\,\%$}   &  \new{$97.0 \,\%$}    &           \new{$53.3\,\%$}       &       \new{$ 13.8 \,\%$}     &   \new{$6.2\,\%$}    \\\midrule
        \new{$\lambda$ \textbf{optimized}} & & & & & \\
  \quad \new{Prediction error  [uncontrolled]}  &   \new{$ 24.0\,\%$}   &  \new{$9.7 \,\%$}    &           \new{$5.1\,\%$}       &       \new{$ 2.4  \,\%$}     &   \new{$1.4\,\%$}     \\
    \quad \new{Prediction error  [square wave control]}  &   \new{$ 27.6\,\%$}   &  \new{$11.0 \,\%$}    &           \new{$7.8\,\%$}       &       \new{$ 6.7  \,\%$}     &   \new{$6.0\,\%$}    \\
        \quad \new{Prediction error  [sinus wave control]}  &  \new{ $ 25.5\,\%$}   & \new{ $10.3 \,\%$}    &           \new{$5.8\,\%$}       &       \new{$ 3.7  \,\%$}     &   \new{$2.9\,\%$}\\       
\bottomrule
\end{tabular}
}
\end{table}

\new{ Next, we investigate the spatial distribution of the prediction error as a function of the initial condition. We report results for two sets of data -- the original set as described above and a second set with initial conditions starting  on a larger circle of radius 0.2 centered around the origin and trajectory length of three seconds. Figure~\ref{fig:vpSpatialError} reports the results (for brevity we depict only the results with square wave control, the other case being qualitatively similar).  We observe that choosing a smaller smaller circle to sample the initial conditions from results in a larger portion of the state-space being covered by the generated trajectories and hence smaller mean prediction error as well as its standard deviation.}

\subsection{Damped Duffing oscillator}
\begin{figure*}[th]
\begin{picture}(140,165)

\put(30,3){\includegraphics[width=70mm]{./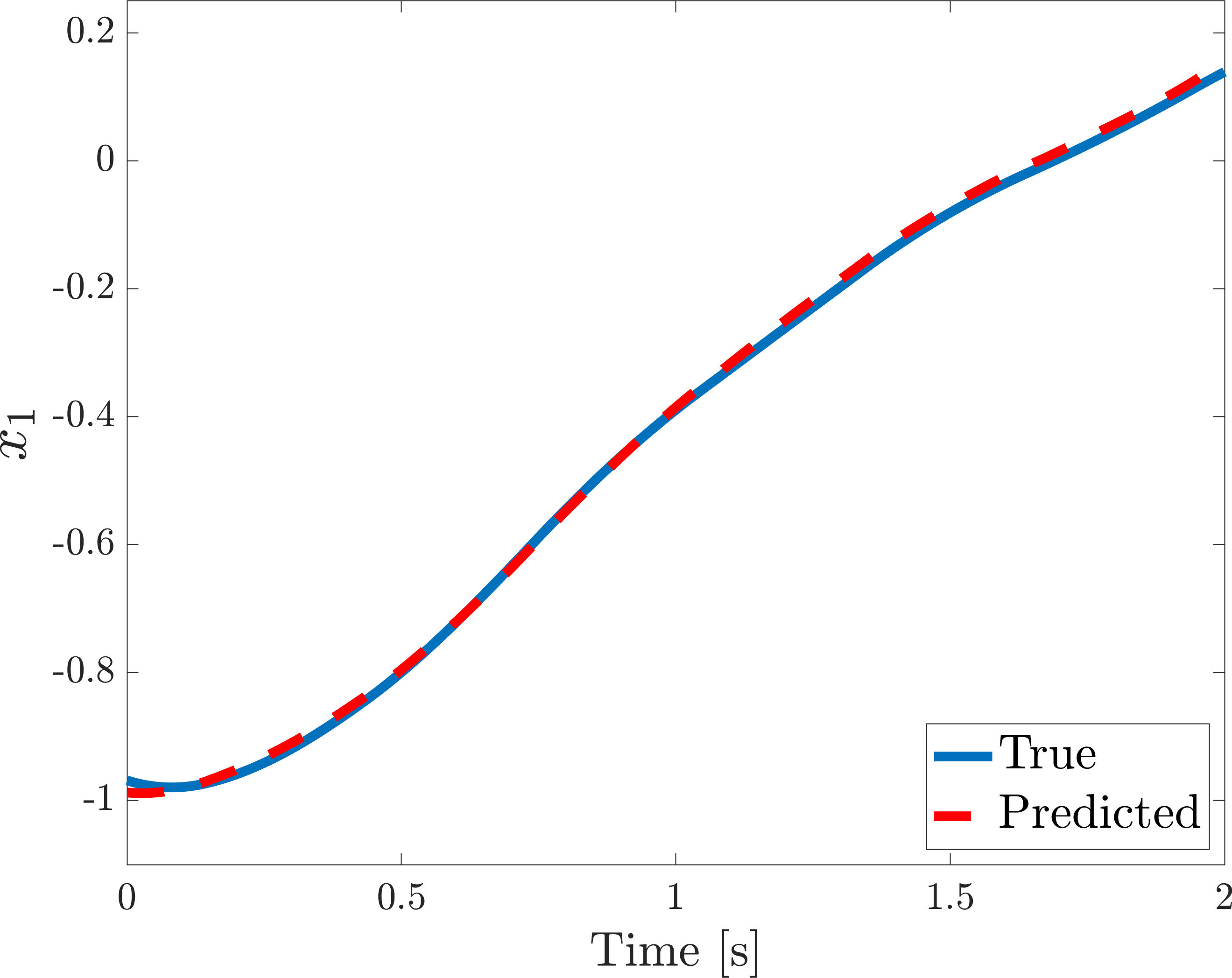}}
\put(250,3){\includegraphics[width=70mm]{./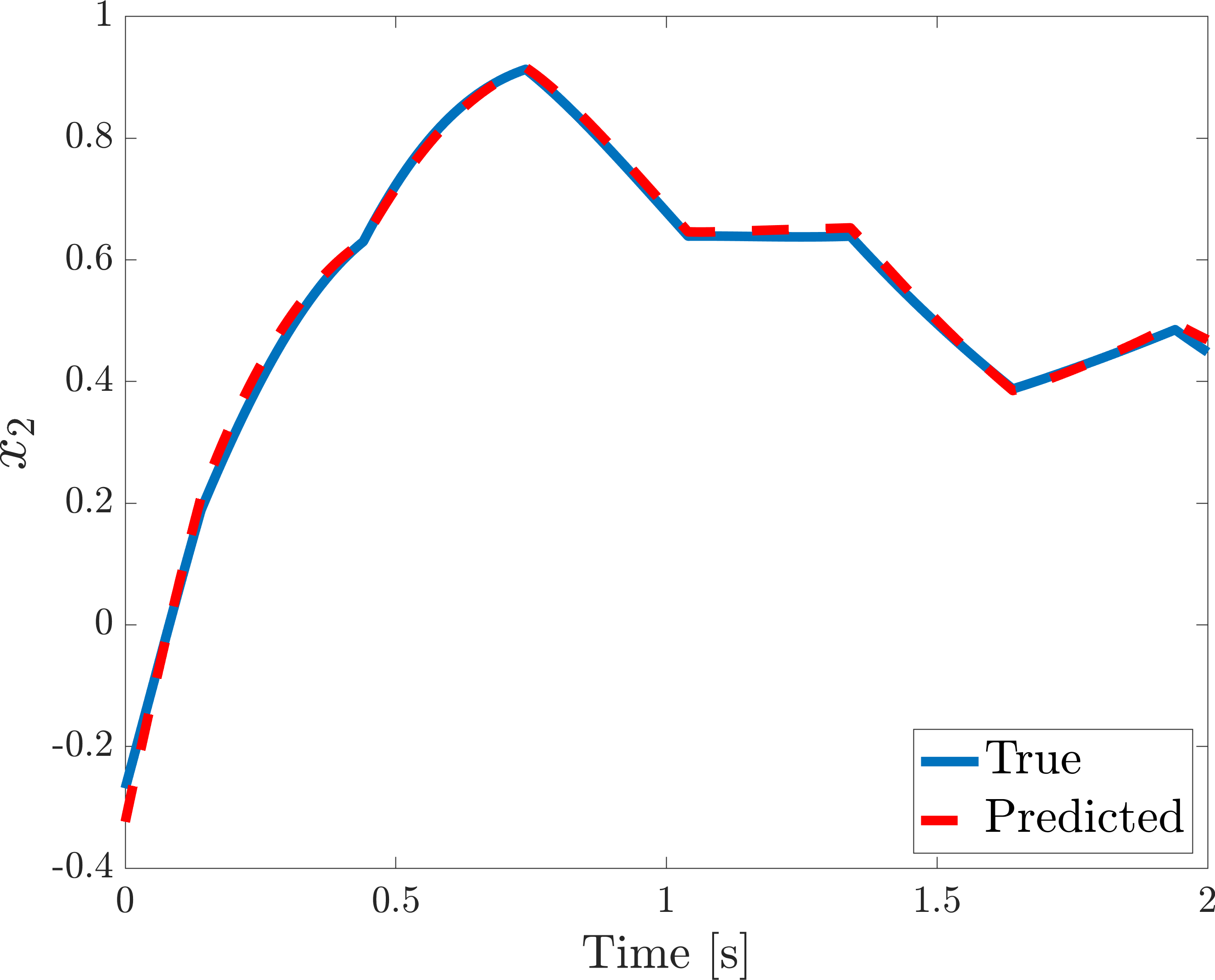}}

\end{picture}
\caption{\footnotesize \new{Duffing oscillator -- Prediction with twenty eigenfunctions with optimized selection of eigenvalues and boundary functions, a randomly chosen initial condition and square wave forcing.}}
\label{fig:duff_pred}
\end{figure*}

\begin{figure*}[th]
\begin{picture}(140,200)


\put(-10,3){\includegraphics[width=81mm]{./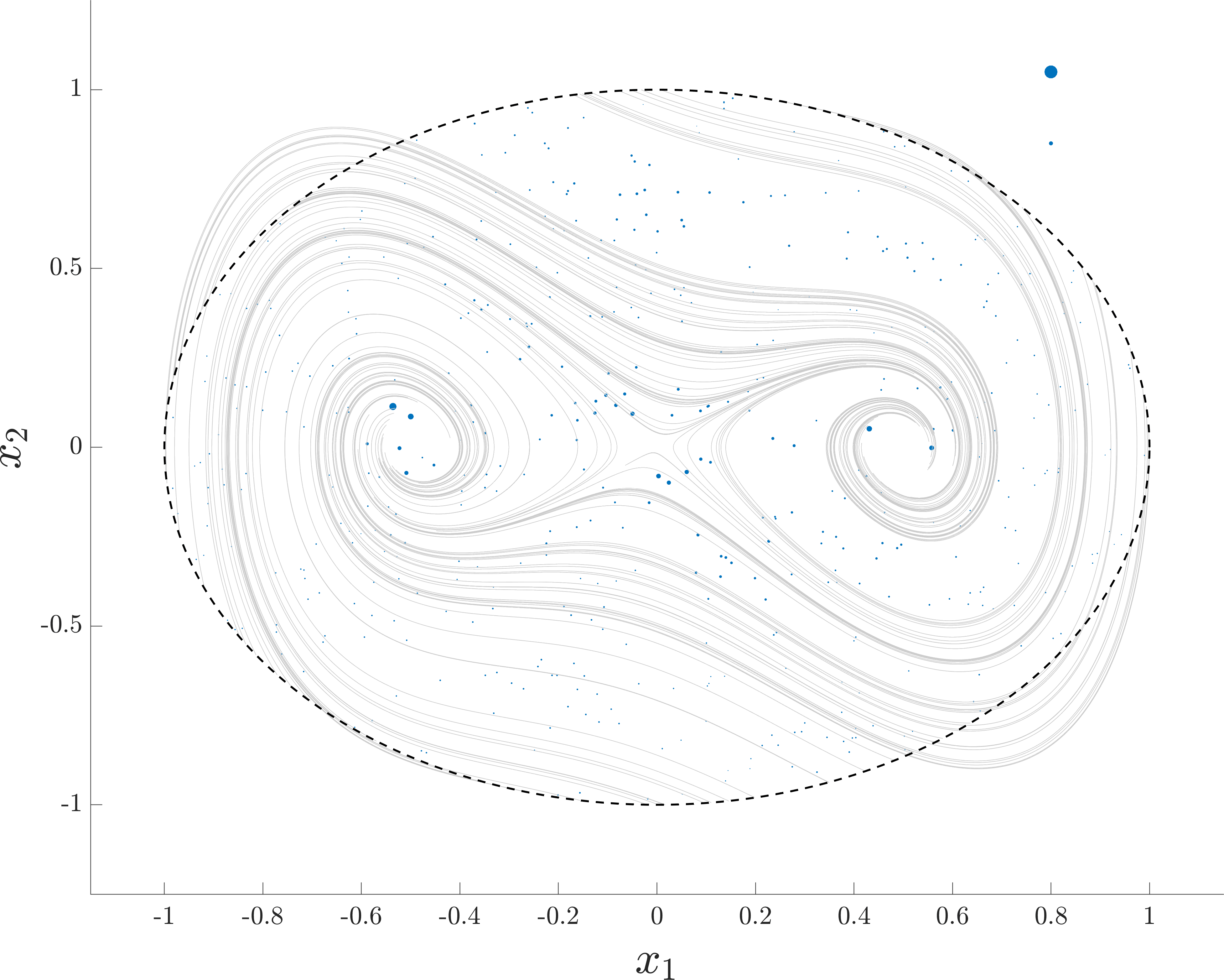}}
\put(230,3){\includegraphics[width=81mm]{./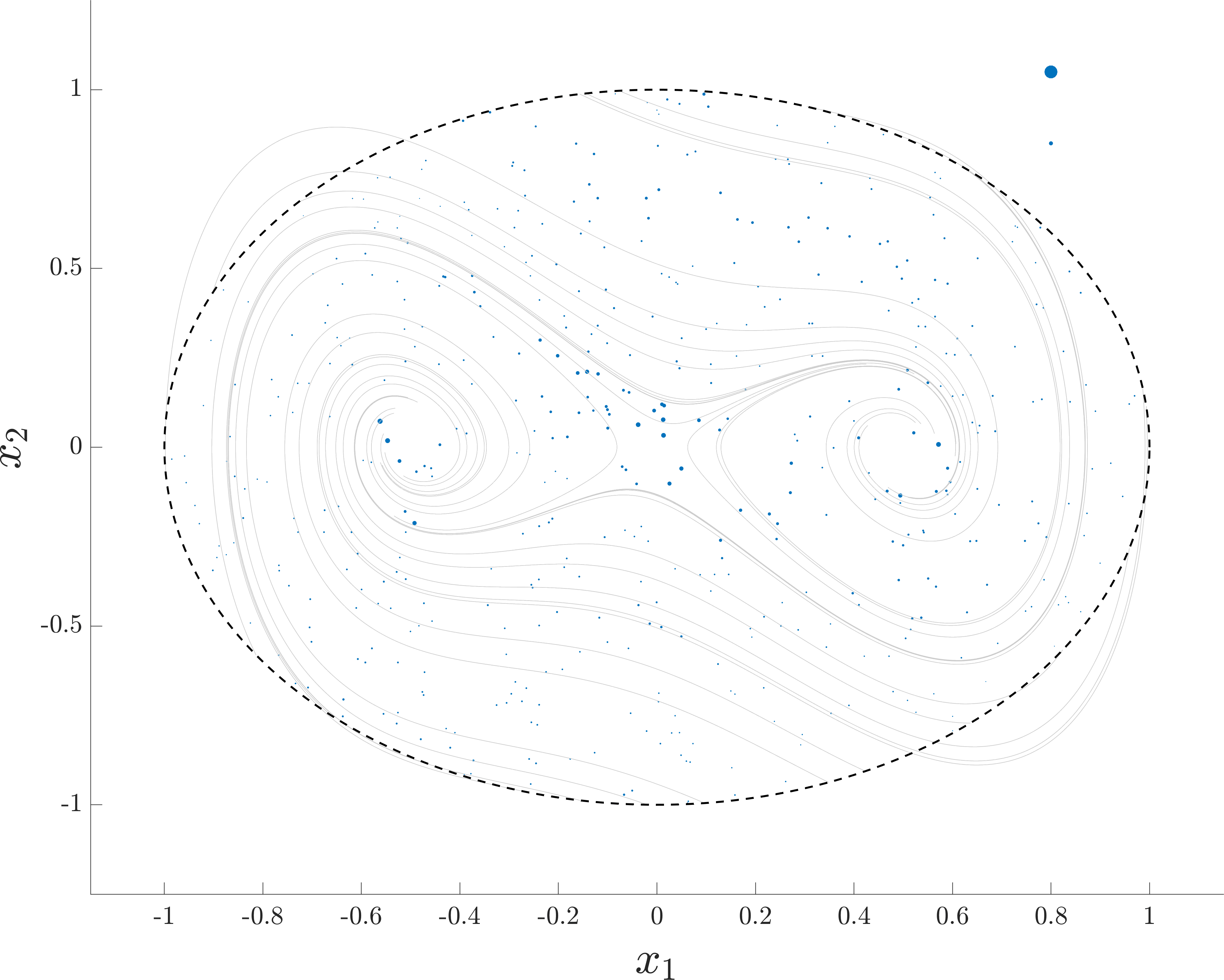}}

\put(197,170){\scriptsize $ 100 \, \%$}
\put(197,158){\scriptsize $ 10 \, \%$}
\put(437,170){\scriptsize $ 100 \, \%$}
\put(437,158){\scriptsize $ 10 \, \%$}

\put(87,180){\scriptsize $\#$\,Trajectories $= 100$}
\put(327,180){\scriptsize  \new{$\#$\,Trajectories $=25$}}

\put(155,34){\scriptsize \new{mean error: \;\, $ 2.1 \, \%$}}
\put(155,24){\scriptsize \new{standard dev: $ 1.9 \, \%$}}
\put(393,36){\scriptsize \new{mean error: \;\;\,$ 2.8 \, \%$}}
\put(393,26){\scriptsize \new{standard dev: \hspace{-1.8mm} $ 2.1 \, \%$}}

\end{picture}
\caption{\footnotesize\new{Damped Duffing oscillator -- Spatial distribution of the prediction error (square wave control signal) with twenty eigenfunctions with optimized selection of eigenvalues and boundary functions. The trajectories used for construction of the eigenfunctions are depicted in grey; their initial conditions were sampled uniformly from the unit circle (dashed black) -- note that the unit circle is \emph{not} a non-recurrent surface for the dynamics and this property is \emph{not} required by the method. The prediction error for each of the 500 initial conditions from the independently generated test set is encoded by the size of the blue marker. }}
\label{fig:duffSpatialError}
\end{figure*}

As our second example we consider the damped duffing oscillator with forcing
\begin{align*}
    \dot{x}_1 &= x_2 \\
    \dot{x}_2 &=  - 0.5x_2 - x_1(4x_1^2 - 1) + 0.5 u.
\end{align*}
The uncontrolled system (with $u = 0$) has two stable equilibria at $(-0.5,0)$ and $(0.5,0)$ as well as an unstable equilibrium at the origin.

\paragraph{Prediction} First we investigate the performance of the proposed predictors on this system. The setup is very similar to the previous example.  \new{The function $\ob$ to predict  is the state itself, i.e., $\ob(x) = x$, and we investigate the predictive performance as function of the number of eigenfunctions $N$ used.   First, we construct the eigenfunction approximations as described in Section~\ref{sec:learning}. The budget of $N$ eigenfunctions is split equally among the components of $\ob$, i.e., $N_1=N_2 = N/2$.} We generate $M_{\mr{t}} = 100$ eight second long trajectories sampled with a sampling period $T_{\mr{s}} = 0.01$ (i.e., $M_{\mr{s}} = 800$). The initial conditions are chosen randomly from a uniform distribution on the unit circle. We note that the unit circle is \emph{not} a non-recurrent surface for this system. 

\new{As before, for non-optimized eigenvalues we utilize $\Lambda_{\mr{lat}} = \lattice_{d_\lambda}( \frac{1}{T_{\mr{s}}} \log \Lambda_\mr{DMD})$, where $\Lambda_\mr{DMD}$ are the eigenvalues obtained from applying the DMD algorithm to the generated data set and $\lattice_{d_\lambda}(\cdot)$ is defined in~(\ref{eq:meshd}). For each value of $N$, we choose $d_{\mr{lat}}$ such that $N_{\mr{lat}}(d_\mr{lat}) \ge N_1=N_2=N/2$ and use the first $N/2$ eigenvalues of $\Lambda_{\mr{lat}}$ in the algorithm of Section~\ref{sec:optimChoice_lam} for optimal choice of the boundary functions for each component of $\ob$; we do not use regularization, i.e., the matrix $\mathbf{G}$ is obtained using (\ref{eq:gopt}) and (\ref{eq:G_data_optim}). Second, we investigate the benefit of optimizing the eigenvalues as described in Section~\ref{sec:optimChoice_lam}; the objective function~(\ref{eq:p_simplified}) is minimized using local Newton-type algorithm implemented in Matlab's fmincon, with analytic gradients computed using~(\ref{eq:p_grad}) and initial condition given by $\Lambda_{\mr{lat}}$.}

The $N $ eigenfunctions are computed on the data set using~(\ref{eq:eigFun_computed}) and linear interpolation is used to define them on the entire state space. The matrix $C$  is obtained using~(\ref{eq:Cbdiag}). To get the matrix $B$ in the controlled setting we generate data with forcing. The initial conditions are the same as in the uncontrolled setting; the forcing is piecewise constant signal taking a random uniformly distributed values in $[-1,1]$ in each sampling interval; the length of each trajectory is two seconds. The matrix $B$ and its discrete counterpart $B_{\mr{d}}$ are then obtained using~(\ref{eq:Bd}) and (\ref{eq:B}). \new{We investigate performance for two control signals distinct from the one used during identification. The first one is a square wave with unit amplitude and period $300\,\mr{ms}$ and the second one a sinusoid wave with unit amplitude and period $60\,\mr{ms}$.} Figure~\ref{fig:duff_pred} shows a prediction for a randomly chosen initial condition in the controlled setting with the the square wave forcing.

\new{Table~\ref{tab:rmse_duff_N} reports the prediction error~(\ref{eq:rmse}) averaged over 500 initial conditions chosen randomly inside the unit circle for different values of $N$. We observe that optimization of eigenvalues brings about a significant improvement, especially with a small number of eigenfunctions. We also notice that with 28 eigenfunctions, the prediction error is around one percent in both the controlled and uncontrolled scenarios. Figure~\ref{fig:duffSpatialError} then shows the spatial distribution of the prediction error over a one second prediction time interval, where we compare the predictors constructed from 100 trajectories and 25 trajectories. We observe that the prediction error deteriorates only very little, showing a remarkable robustness to the amount of data used (we tested this further and even with 10 trajectories, the mean error increased to only $5.4\,\%$). We also observed the locations of the optimized eigenvalues to be very robust with respect to the number of trajectories used.
}


\begin{table}[t]
\centering
\caption{\footnotesize \rm \new{Damped Duffing oscillator -- Prediction error averaged over 500 randomly chosen initial conditions as a function of the total number of eigenfunctions $N$, with and without optimization of the eigenvalues $\lambda$. }}\label{tab:rmse_duff_N}\vspace{2mm}
{\small
\begin{tabular}{lccccc}
\toprule
\new{$\#$ of eigenfunctions}                         &           \new{12}       &       \new{16}             &       \new{20}                    &           \new{24}   & 		 \new{28}         \\\midrule
\new{$\lambda$ \textbf{not optimized}} & & & & & \\
  \quad \new{Prediction error [uncontrolled]}  &   \new{$ 26.0\,\%$}   &  \new{$22.5 \,\%$}    &          \new{$8.3\,\%$}       &       \new{$ 2.7  \,\%$}     &   \new{$1.1\,\%$}     \\
    \quad \new{Prediction error  [square wave control]}  &   \new{$ 26.0\,\%$}   &  \new{$22.1 \,\%$}    &           \new{$7.6\,\%$}       &       \new{$ 2.7  \,\%$}     &   \new{$1.5\,\%$}    \\
        \quad \new{Prediction error  [sinus wave control]}  &   \new{$ 25.2\,\%$}   &  \new{$21.2 \,\%$}    &           \new{$7.3\,\%$}       &       \new{$ 2.4  \,\%$}     &   \new{$1.1\,\%$}    \\\midrule
        \new{$\lambda$ \textbf{optimized}} & & & & & \\
  \quad \new{Prediction error  [uncontrolled]}  &   \new{$ 11.2\,\%$}   &  \new{$4.7 \,\%$}    &           \new{$2.1\,\%$}       &       \new{$1.1  \,\%$}     &   \new{$0.8\,\%$}     \\
    \quad \new{Prediction error  [square wave control]}  &   \new{$ 10.6\,\%$}   &  \new{$5.0 \,\%$}    &           \new{$2.3\,\%$}       &       \new{$ 1.5  \,\%$}     &   \new{$1.3\,\%$}    \\
        \quad \new{Prediction error  [sinus wave control]}  &   \new{$ 10.4\,\%$}   &  \new{$4.4 \,\%$}    &           \new{$2.0\,\%$}       &       \new{$ 1.2  \,\%$}     &   \new{$0.9\,\%$}\\
        
\bottomrule
\end{tabular}
}
\end{table}


\paragraph{Feedback control}
\begin{figure*}[t!]
\begin{picture}(140,320)
\put(50,170){\includegraphics[width=60mm]{./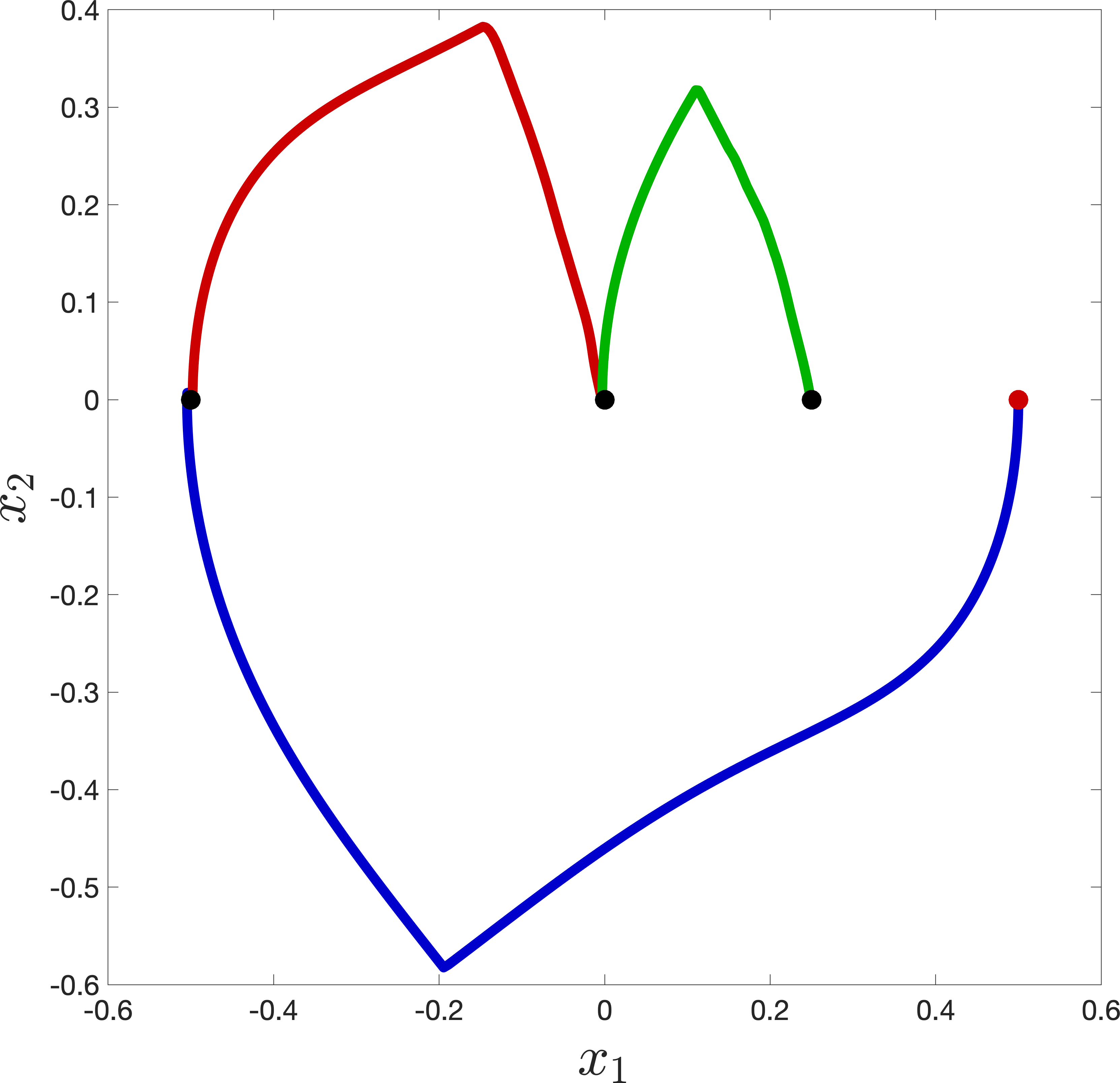}}
\put(250,170){\includegraphics[width=60mm]{./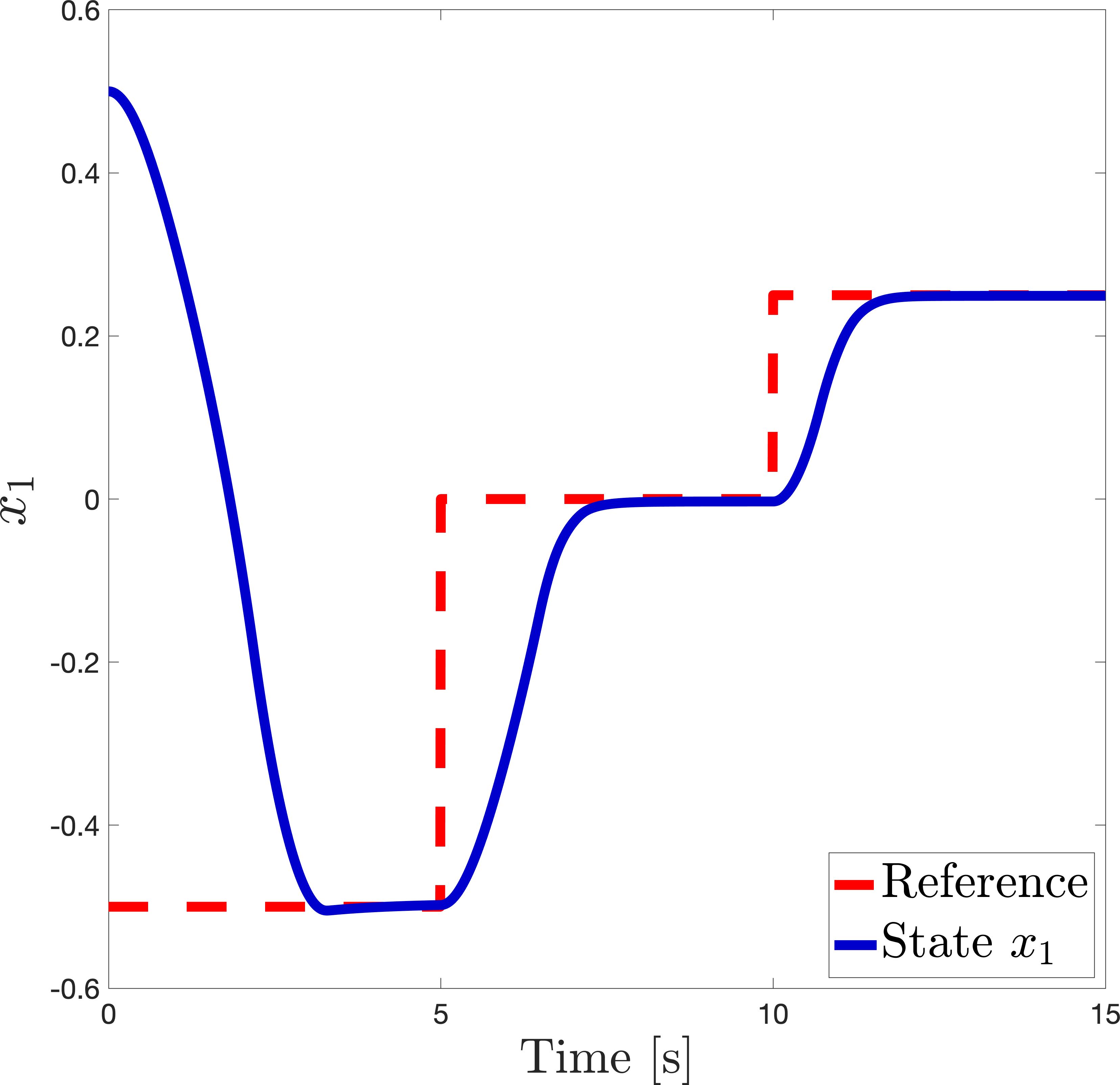}}
\put(50,0){\includegraphics[width=60mm]{./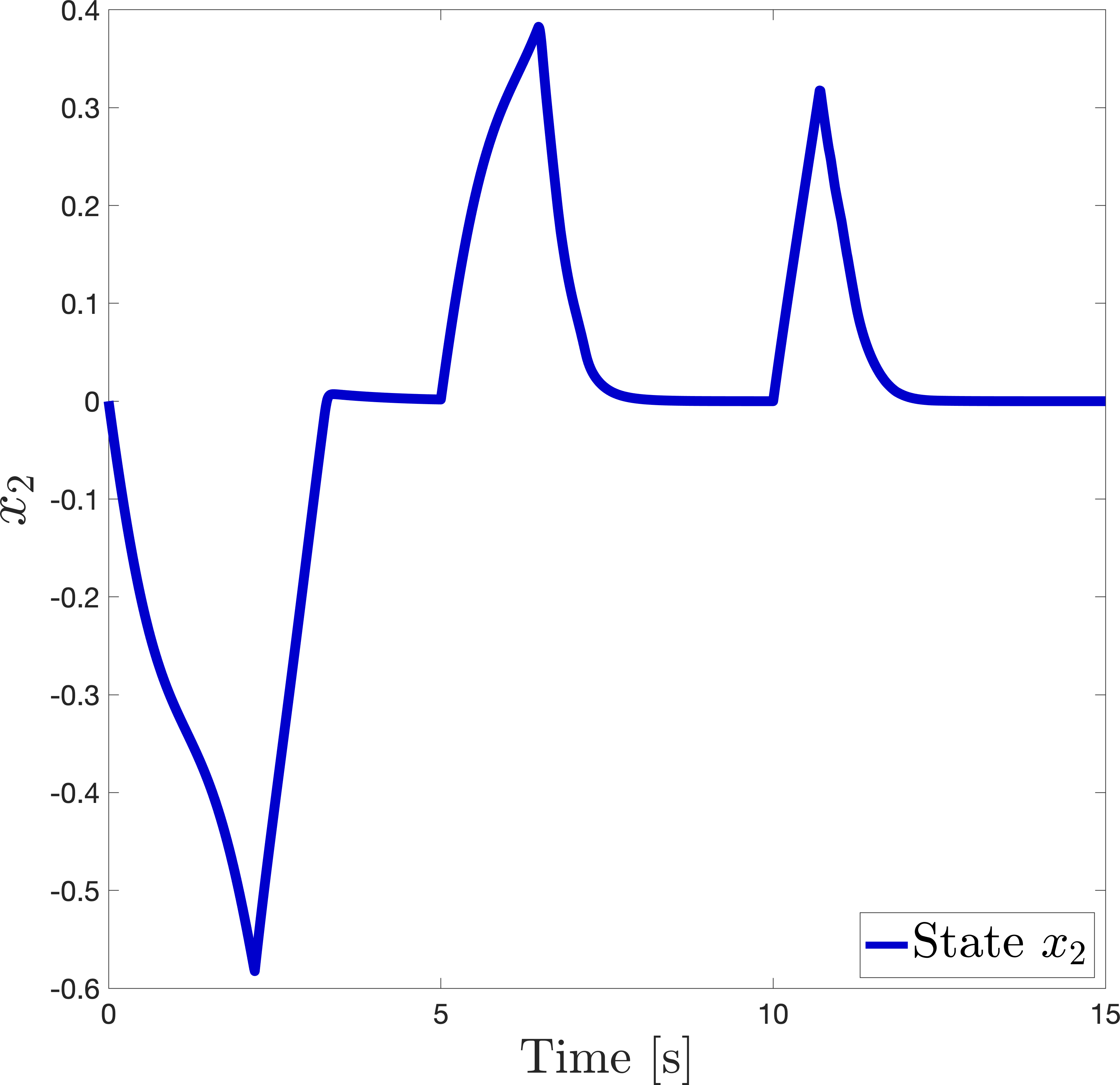}}
\put(250,0){\includegraphics[width=60mm]{./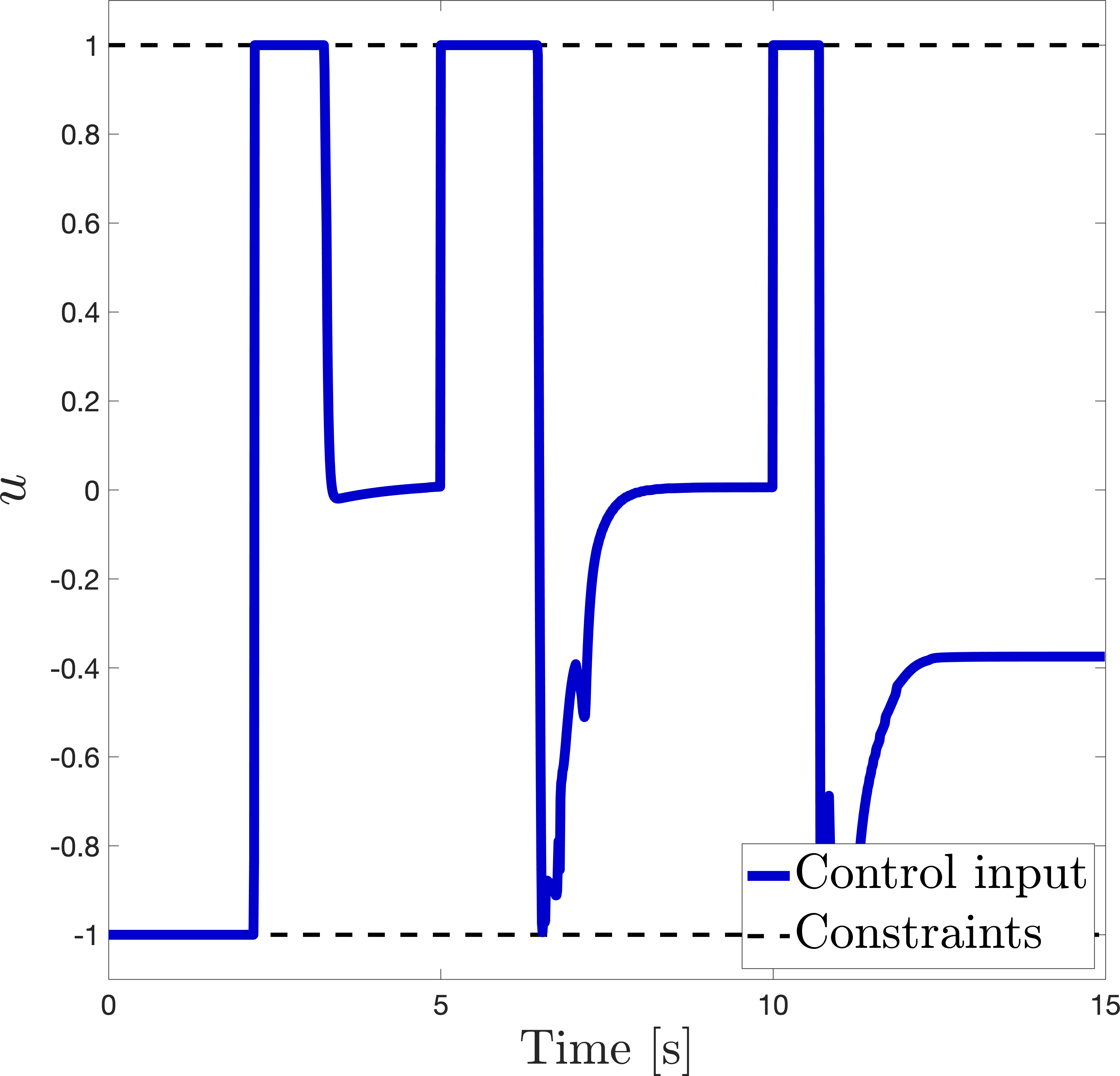}}

\put(120,250){\scriptsize Phase space}

\end{picture}
\caption{\small Duffing oscillator -- feedback control using Koopman MPC.}
\label{fig:duffingControl}
\end{figure*}

\new{Next, we apply the Koopman MPC developed in Section~\ref{sec:KoopmanMPC} to control the system with twenty eigenfunctions with optimimized selection of the eigenvalues and boundary functions. The goal is to track a piecewise constant reference signal, where we move from one stable equilibrium to the other $(0.5,0)\mapsto (-0.5,0)$, continue to the unstable saddle point at the origin and finish at $(0.25,0)$ which is not an equilibrium point for the uncontrolled system but is stabilizable for the controlled one. The matrices $Q$ and $R$ in~(\ref{eq:MPC}) were chosen $Q = \mr{diag}(1,0.1)$ and $R = 10^{-4}$ and we imposed the constraint $u\in [-1,1]$ on the control input. The prediction horizon was set to one second, i.e., $N_{\mr{p}} = 1.0 / T_{\mr{s}} = 100$.  The results are depicted in Figure~\ref{fig:duffingControl}; the tracking goal was achieved as desired. During the closed-loop operation, the MPC problem~(\ref{eq:MPC_dense}) was solved using the qpOASES solver~\cite{ferreau2014qpoases}.  The average computation time per time step was $0.59\,\mr{ms}$, of which approximately $0.43\,\mr{ms}$ was spent evaluating the eigenfunction mapping $\bs {\hat \phi}$ (Step 3 of~Algorithm~\ref{alg:koopmanMPC}) whereas the solution to the quadratic program~(\ref{eq:MPC_dense}) took on average only $0.16\,\mr{ms}$. The evaluation of $\bs {\hat \phi}$ could be significantly sped up by a more sophisticated interpolation implementation. We emphasize that the entire design was purely data driven and based only on linear model predictive control.} 

\section{Conclusion}
This work presented a systematic framework for data-driven learning of Koopman eigenfunctions in non-recurrent regions of the state-space. The method is geared toward prediction and control using \emhp{linear predictors}, allowing for feedback control and state estimation for nonlinear dynamical systems using established tools for linear systems that rely solely on convex optimization or simple linear algebra. The proposed method exploits the richness of the spectrum of the Koopman operator away from attractors to construct a large number of eigenfunctions in order to minimize the projection error of the state (or any other observable of interest) on the span of the eigenfunctions. The proposed method is purely data-driven and very simple, relying only on linear algebra and/or convex optimization, with computation complexity comparable to DMD-type methods \new{and with very little user-input required; in particular only an interpolation method has to be selected, whereas the boundary functions and eigenvalues are determined from data using numerical optimization.}

Future work will explore possible extensions of the method to the case where recurrences are present.

\section*{Appendix}\label{ap:matrices}
 The matrices in~(\ref{eq:MPC_dense}) are given in terms of the data of~(\ref{eq:MPC}) by
\[
H_1 = {\mathbf{R}} +{ \mathbf{B}}^\top\mathbf Q {\mathbf{B}},\quad h = {\mathbf{q}}^\top \mathbf B + \mathbf {r}^\top,\quad H_2 = 2\mathbf A^\top\bf Q \mathbf {B},
\]
\[
L = \mathbf{F}+\mathbf{E}\mathbf{B},\quad M = \mathbf{E}\mathbf{A}, \quad d = [b_0^\top,\ldots,b_{N_\mr{p}}^\top]^\top,
\]
where
{
\[
\mathbf{A} = 
\begin{bmatrix}
I \\ CA_{\mr{d}} \\ CA_{\mr{d}}^2\\ \vdots\\ CA_{\mr{d}}^{N_{\mr{p}}}
\end{bmatrix}, \,
\mathbf{B} = \begin{bmatrix}0&0&\ldots&0\\
CB_{\mr{d}} &0&\ldots&0\\
CA_{\mr{d}}B_{\mr{d}} & CB_{\mr{d}} & \ldots  & 0 \\
 \vdots &\ddots&\ddots\\
 CA_{\mr{d}}^{N_p-1}B_{\mr{d}} & \ldots & CA_{\mr{d}}B_{\mr{d}} & CB_{\mr{d}} \end{bmatrix},\, \mathbf{F}=\begin{bmatrix}
 F_0& 0 &\ldots &0\\
 0 & F_1 & \ldots & 0\\
 \vdots & & \ddots & \vdots \\
 0 &  0 & \ldots & F_{N_\mr{p}-1}\\
 0 & 0 & \ldots & 0
 \end{bmatrix}
\]
\[
\mathbf{Q} = \mathrm{diag}(Q_0,\ldots,Q_{N_\mr{p}}),\;\; \mathbf{R} = \mathrm{diag}(R_0,\ldots,R_{N_\mr{p}-1}),
\]
\[
\mathbf{E} = \mathrm{diag}(E_0,\ldots,E_{N_\mr{p}}),\,\mathbf{q} = [q_0,\ldots, q_{N_\mr{p}}],\;\; \mathbf{r} = [r_0,\ldots,r_{N_\mr{p}-1}].
\]
}

\section{Acknowledgments}
\new{The authors would like to thank Corbinian Schlosser for pointing out to us a new proof of Theorem~\ref{thm:mainDensityThm}, which both simplifies our original proof as well as eliminates one of the assumptions. The authors would also like to thank Shai Revzen for insightful discussions.} This research was supported supported by the Czech 
Science Foundation (GACR) under contract No. 20-11626Y and by the ARO-MURI grant W911NF17-1-0306.

\bibliographystyle{abbrv}
\bibliography{./References}

\end{document}